\theoremstyle{plain}
\newtheorem{corollary}{Corollary}
\newtheorem{definition}{Definition}
\newtheorem{example}{Example}
\newtheorem{lemma}{Lemma}
\newtheorem{notation}{Notation}
\newtheorem{proposition}{Proposition}
\newtheorem{remark}{Remark}
\newtheorem{theorem}{Theorem}
\numberwithin{equation}{section}
\newcommand{\comment}[1]{{}}
\DeclareMathOperator{\essinf}{ess\  inf}
\DeclareMathOperator{\esssup}{ess\ sup}
\begin{document}
\title[Assouad dimensions of random measures]{Assouad-like dimensions of
random Moran measures}
\author{Kathryn E. Hare}
\address{Dept. of Pure Mathematics, University of Waterloo, Waterloo, Ont.,
Canada, N2L 3G1}
\email{kehare@uwaterloo.ca}
\author{Franklin Mendivil}
\address{Department of Mathematics and Statistics, Acadia University,
Wolfville, N.S. Canada, B4P 2R6}
\email{franklin.mendivil@acadiau.ca}
\thanks{The research of K. Hare is partially supported by NSERC 2016:03719.
The research of F. Mendivil is partially supported by NSERC\ 2019:05237.}
\subjclass[2010]{Primary 28A80; Secondary 28C15, 60G57}
\keywords{ random Moran set, random measure, Assouad dimension,
quasi-Assouad dimension}
\thanks{This paper is in final form and no version of it will be submitted
for publication elsewhere.}

\begin{abstract}
In this paper, we determine the almost sure values of the $\Phi $-dimensions
of random measures supported on random Moran sets that satisfy a uniform
separation condition. The $\Phi $-dimensions are intermediate Assouad-like
dimensions, the (quasi-)Assouad dimensions and $\theta $-Assouad spectrum
being special cases. Their values depend on the size of $\Phi $, with one
size coinciding with the Assouad dimension and the other coinciding with the
quasi-Assouad dimension. We give many applications, including to
equicontractive self-similar measures and $1$-variable random Moran measures
such as Cantor-like measures with probabilities that are uniformly
distributed. We can also deduce the $\Phi $-dimensions of the underlying
random sets.
\end{abstract}

\maketitle

\section{Introduction}

Many notions of dimension have been developed to quantify the size of sets
and measures, in part, to better understand the geometry of the set and the
nature of the measure. Perhaps the most well known are Hausdorff and box
dimensions, global notions of size. More recently, there has been much
interest in understanding the local complexity of sets and measures and for
this various dimensions have been introduced including the (upper and lower)
Assouad dimensions, which quantify the `thickest' and `thinnest' parts of
sets and measures (see \cite{A2, FH, KLV, L1}), the less extreme
quasi-Assouad dimensions (\cite{CDW, HHT, HT, LX}), the $\theta $-Assouad
spectrum (\cite{FY}), and the most general, intermediate $\Phi $-dimensions (%
\cite{FY, GHM, HH}). The $\Phi $-dimensions range between the box and
Assouad dimensions. They are localized, like the Assouad dimensions, but
vary in the depth of the scales considered, thus they provide very refined
information. In this paper, we continue the study of the upper and lower $%
\Phi $-dimensions of measures, with a focus on random measures supported on
random Moran sets.

There is a long history of the study of dimensional properties of random
sets, with early important early papers including \cite{Fa, Gr}, for
example. More recently, Assouad-like dimensions of random sets have been
investigated in papers such as \cite{FMT, FT, GHM2, Tr, Tr2}.

By a random Moran measure, we will mean a probability measure supported on a
random Moran set in $\mathbb{R}^{D}$ that can be thought of as arising from
a random homogenous model of uncountably many equicontractive iterated
function systems satisfying a suitable uniform strong separation condition.
The number of children at each level, the ratios of child to parent
diameters and the probability weights assigned to each child that specify
the measure, are all to be iid random variables. We refer the reader to
Section \ref{setup} for the technical details.

The values of the $\Phi $-dimensions of these random measures depend on how
the dimension function $\Phi $ compares with the function $\Psi (x)=\log
|\log x|/|\log x|$ near $0$, as was also seen to be the case in \cite{GHM2}
where the $\Phi $-dimensions of random rearrangements of Cantor-like sets
were considered. Under mild technical assumptions on the probability and
ratio distributions, when $\Phi (x)\gg \Psi (x)$\footnote{%
We will write $f\gg g$ if there is a function $A$ and $\delta >0$ such that $%
f(x)\geq A(x)g(x)$ for all $0<x<\delta $ and $A(x)\rightarrow \infty $ as $%
x\rightarrow 0^{+}$.} (such as for the quasi-Assouad dimensions), then
almost surely the value of all the upper $\Phi $-dimensions of the random
measure is $\mathbb{E}(\log m)/\mathbb{E}(\log r)$ where $m$ is the minimum
probability and $r$ is the child/parent diameter ratio. For the lower $\Phi $%
-dimension we simply replace the minimum probability $m$ by the maximum $M$.
When $\Phi (x)\ll \Psi (x)$ (such as for the Assouad dimensions) and the
essential infimum of either the ratios or the probabilities is bounded away
from $0,$ then the upper (and lower) $\Phi $-dimensions again coincide,
being the almost sure extreme behaviour of $\log m/\log r$ (resp., $\log
M/\log r$). These results are formally stated and proven in Sections \ref%
{largesection} and \ref{smallsection}.

One special case to which our theorems apply is when the set is
deterministic and the probabilities are chosen uniformly distributed. For
example, take the classical middle-third Cantor set. If the two
probabilities, $p,1-p,$ are equal, all the dimensions (of either the set or
the measure) equal $\log 2/\log 3$. In contrast, if the probabilities are
chosen with $p$ uniformly distributed over $(0,1),$ then for $\Phi \gg \Psi
, $ the upper $\Phi $-dimension of the random measure is almost surely $%
(1+\log 2)/\log 3,$ while the lower dimension is $(1-\log 2)/\log 3$ (which,
perhaps surprisingly, do not average to $\log 2/\log 3$). For $\Phi \ll \Psi 
$ the upper and lower $\Phi $-dimensions are almost surely $\infty $ and $0$
respectively. More complicated formulas hold if the Cantor set has $T$
children, with $T>2,$ and the probabilities are chosen uniformly over the
simplex $\{(p_{i})_{i=1}^{T}:\sum_{i=1}^{T}p_{i}=1,p_{i}\geq 0\}$.

Another special case to which our theorems apply is a random 1-variable
(homogeneous) model with finitely many equicontractive iterated function
systems, satisfying the strong separation condition. In \cite{Tr} it was
shown that the upper quasi-Assouad dimensions for these random sets coincide
almost surely with their Hausdorff dimensions. If we take the measure with
uniform probabilities, the $\Phi $-dimensions of the measure equal those of
the random set, thus the Hausdorff dimension coincides almost surely with
the $\Phi $-dimensions for all $\Phi \gg \Psi $ (including both the upper
and lower quasi-Assouad dimensions). A different formula applies for $\Phi
\ll \Psi ,$ (including the Assouad dimension).

These examples are discussed in Section \ref{applications}, along with
others. Section \ref{prelim} contains the definitions of the $\Phi $%
-dimensions, as well as their basic properties, and details the random setup.

\section{Preliminaries\label{prelim}}

\subsection{Dimensions of sets and measures}

Given a bounded metric space $X,$ we denote the open ball centred at $x\in X$
and radius $R$ by $B(x,R)$. By a measure, we will always mean a Borel
probability measure on the metric space $X$.

\begin{definition}
By a \textbf{dimension function}, we mean a map $\Phi :(0,1)\rightarrow 
\mathbb{R}^{+}$ such that $x^{1+\Phi (x)}$ decreases to $0$ as $x$ decreases
to $0$.
\end{definition}

Interesting examples include the constant functions $\Phi (x)=\delta \geq 0$%
, $\Phi (x)=1/|\log x|$ and $\Phi (x)=\log \left\vert \log x\right\vert
/\left\vert \log x\right\vert .$

\begin{definition}
Let $\Phi $ be a dimension function and let $\mu $ be a measure on $X$. The 
\textbf{upper and lower }$\Phi $\textbf{-dimensions} of $\mu $ are given by%
\begin{equation*}
\overline{\dim }_{\Phi }\mu =\inf \left\{ 
\begin{array}{c}
d:(\exists C_{1},C_{2}>0)(\forall 0<r<R^{1+\Phi (R)}\leq R\leq C_{1}) \\ 
\frac{\mu (B(x,R))}{\mu (B(x,r))}\leq C_{2}\left( \frac{R}{r}\right) ^{d}%
\text{ }\forall x\in \text{supp}\mu%
\end{array}%
\right\}
\end{equation*}%
and 
\begin{equation*}
\underline{\dim }_{\Phi }\mu =\sup \left\{ 
\begin{array}{c}
d:(\exists C_{1},C_{2}>0)(\forall 0<r<R^{1+\Phi (R)}\leq R\leq C_{1}) \\ 
\frac{\mu (B(x,R))}{\mu (B(x,r))}\geq C_{2}\left( \frac{R}{r}\right) ^{d}%
\text{ }\forall x\in \text{supp}\mu%
\end{array}%
\right\}
\end{equation*}
\end{definition}

\begin{remark}
(i) The \textbf{upper and lower Assouad dimensions} of $\mu $ (also known as
the upper and lower regularity dimensions) studied by K\"{a}enm\"{a}ki et al
in \cite{KL, KLV} and Fraser and Howroyd in \cite{FH}, and denoted $\dim
_{A}\mu $ and $\dim _{L}\mu $ respectively, are the upper and lower $\Phi $%
-dimensions with $\Phi $ the constant function $0$. It is well known that a
measure $\mu $ is doubling if and only if $\dim _{A}\mu $ $<\infty $ and
uniformly perfect if and only if $\dim _{L}\mu >0$.

(ii) If we let $\Phi _{\theta }=1/\theta -1,$ then $\overline{\dim }_{\Phi
_{\theta }}\mu $ and \underline{$\dim $}$_{\Phi _{\theta }}\mu $ are
(basically) the \textbf{upper} and \textbf{lower $\theta $-Assouad spectrum}
introduced in \cite{FY}. The \textbf{upper} and \textbf{lower quasi-Assouad
dimensions} of $\mu $ developed in \cite{HHT, HT} are given by 
\begin{equation*}
\dim _{qA}\mu =\lim_{\theta \rightarrow 1}\overline{\dim }_{\Phi _{\theta
}}\mu \text{, }\dim _{qL}\mu =\lim_{\theta \rightarrow 1}\underline{\dim }%
_{\Phi _{\theta }}\mu .
\end{equation*}

As noted in \cite{HH}, there are always dimension functions which give rise
to the quasi-Assouad dimensions, but these need to be tailored to the
particular measure.

(iii) The \textbf{upper Minkowski dimension}, $\overline{\dim }_{M}\mu ,$
and the \textbf{Frostman dimension}, $\dim _{F}\mu ,$ coincide with the
upper and lower $\Phi $-dimensions respectively for $\Phi \rightarrow \infty 
$; see \cite{FK} and Proposition \ref{basic}.
\end{remark}

The upper and lower $\Phi $-dimensions of a measure were introduced in \cite%
{HH} to provide more refined information about the local behaviour of a
measure than that given by the upper and lower Assouad dimensions. They were
motivated, in part, by related definitions for dimensions of sets. To recall
these, we use the notation $N_{r}(Y)$ to mean the least number of balls of
radius $r$ that cover $Y\subseteq X$.

\begin{definition}
The \textbf{upper }and \textbf{lower }$\Phi $\textbf{-dimensions\ }of $%
E\subseteq X$ are given by 
\begin{equation*}
\overline{\dim }_{\Phi }E=\inf \left\{ 
\begin{array}{c}
\alpha :(\exists C_{1},C_{2}>0)(\forall 0<r\leq R^{1+\Phi (R)}\leq R<C_{1})%
\text{ } \\ 
N_{r}(B(z,R)\bigcap E)\leq C_{2}\left( \frac{R}{r}\right) ^{\alpha }\text{ }%
\forall z\in E%
\end{array}%
\right\}
\end{equation*}%
and 
\begin{equation*}
\underline{\dim }_{\Phi }E=\sup \left\{ 
\begin{array}{c}
\alpha :(\exists C_{1},C_{2}>0)(\forall 0<r\leq R^{1+\Phi (R)}\leq R<C_{1})%
\text{ } \\ 
N_{r}(B(z,R)\bigcap E)\geq C_{2}\left( \frac{R}{r}\right) ^{\alpha }\text{ }%
\forall z\in E%
\end{array}%
\right\} .
\end{equation*}
\end{definition}

The $\Phi $-dimensions were first thoroughly studied in \cite{GHM},
expanding upon the earlier work of \cite{FY}. The (quasi-) upper and lower
Assouad dimensions and the upper and lower $\theta $-Assouad spectrum are
again special cases, arising in the same manner as for measures. It is known
that 
\begin{equation*}
\dim _{L}E\leq \underline{\dim }_{\Phi }E\leq \underline{\dim }_{B}E\leq 
\overline{\dim }_{B}E\leq \overline{\dim }_{\Phi }E\leq \dim _{A}E
\end{equation*}%
and for closed sets $\dim _{L}E\leq \dim _{H}E$. For further background and
proofs, we refer the reader to the references mentioned in the introduction,
as well as Fraser's monograph, \cite{Fraserbook}.

Obviously, if $\Phi (x)\leq \Psi (x)$ for all $x>0$, then for any measure $%
\mu $ we have 
\begin{equation*}
\overline{\dim }_{\Psi }\mu \text{ }\leq \overline{\dim }_{\Phi }\mu \text{
and }\underline{\dim }_{\Phi }\mu \leq \underline{\dim }_{\Psi }\mu \text{ .}
\end{equation*}%
A similar statement holds for the $\Phi $-dimension of sets. The next
Proposition summarizes other relationships between these dimensions. For the
proofs of these facts and many other properties of the $\Phi $-dimensions of
measures, we refer the reader to \cite{HH} and the references cited there.

\begin{proposition}
{\ \label{basic}} Let $\Phi $ be a dimension function and $\mu $ be a
measure.

(i) Then $\overline{\dim }_{\Phi }\mu \geq \overline{\dim }_{\Phi }\ \mathrm{%
supp}\mu \geq \dim _{H}\mathrm{supp}\mu $ and%
\begin{equation}
\dim _{L}\mu \leq \underline{\dim }_{\Phi }\mu \leq \dim _{F}\mu \leq 
\overline{\dim }_{M}\mu \leq \overline{\dim }_{\Phi }\mu \leq \dim _{A}\mu .
\label{R1}
\end{equation}%
If $\mu $ is doubling, then $\underline{\dim }_{\Phi }\mu \leq \underline{%
\dim }_{\Phi }\mathrm{supp}\mu .$

(ii) If $\Phi (x)\rightarrow 0$ as $x\rightarrow 0,$ then $\underline{\dim }%
_{\Phi }\mu \leq \dim _{qL}\mu $ and $\dim _{qA}\mu \leq \overline{\dim }%
_{\Phi }\mu $ .

(iii) If there exists $x_{0}>0$ such that $\Phi (x)\leq C/\left\vert \log
x\right\vert $ for $0<x\leq x_{0},$ then $\overline{\dim }_{\Phi }\mu =\dim
_{A}\mu $ and $\underline{\dim }_{\Phi }\mu =\dim _{L}\mu $.

(iv) If $\Theta =\limsup_{x\rightarrow 0}\Phi (x)^{-1},$ then 
\begin{eqnarray*}
\underline{\dim }_{\Phi }\mu &\geq &\dim _{F}\mu -\Theta (\overline{\dim }%
_{M}\mu -\dim _{F}\mu ), \\
\overline{\dim }_{\Phi }\mu &\leq &\overline{\dim }_{M}\mu +\Theta (%
\overline{\dim }_{M}\mu -\dim _{F}\mu ).
\end{eqnarray*}
\end{proposition}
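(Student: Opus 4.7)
The plan is to treat the four parts separately. Most follow by short, essentially definitional arguments; only (iv) requires a genuine interpolation. Throughout I will use the monotonicity principle: a larger $\Phi$ shrinks the admissible range $r<R^{1+\Phi(R)}$, hence decreases $\overline{\dim}_\Phi$ and increases $\underline{\dim}_\Phi$.

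For (i), the bounds $\dim_L\mu \leq \underline{\dim}_\Phi\mu$ and $\overline{\dim}_\Phi\mu \leq \dim_A\mu$ are immediate from monotonicity (Assouad corresponds to $\Phi \equiv 0$). The bound $\underline{\dim}_\Phi\mu \leq \dim_F\mu$ is obtained by fixing a single small $R$ and applying the lower $\Phi$-bound to get $\mu(B(x,r)) \leq C'r^d$ for small $r$; the symmetric argument at the upper end yields $\overline{\dim}_M\mu \leq \overline{\dim}_\Phi\mu$, while $\dim_F\mu \leq \overline{\dim}_M\mu$ is standard. For the support comparison $\overline{\dim}_\Phi\mu \geq \overline{\dim}_\Phi\mathrm{supp}\mu$, I would take a maximal $r$-packing $\{x_i\}_{i=1}^N$ of $B(z,R)\cap\mathrm{supp}\mu$, note $N_r(B(z,R)\cap\mathrm{supp}\mu) \leq N$, and use $\mu(B(z,2R)) \geq \sum_i\mu(B(x_i,r/2))$ together with the upper $\Phi$-bound at each $x_i$, which lower-bounds $\mu(B(x_i,r/2))$ by a constant multiple of $\mu(B(z,R))(r/R)^d$, to conclude $N \leq C(R/r)^d$. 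The Hausdorff bound follows from $\dim_H \leq \overline{\dim}_M \leq \overline{\dim}_\Phi$ for sets, and the doubling version runs the same packing argument with measure lower bounds supplied by the doubling hypothesis.

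For (ii), since $\Phi_\theta = 1/\theta-1$ is a positive constant, the hypothesis $\Phi(x)\to 0$ ensures $\Phi(x) \leq \Phi_\theta$ for all small $x$ and each fixed $\theta<1$; by monotonicity $\overline{\dim}_{\Phi_\theta}\mu \leq \overline{\dim}_\Phi\mu$, and taking $\theta\to 1$ gives $\dim_{qA}\mu\leq \overline{\dim}_\Phi\mu$, with the lower statement symmetric. For (iii), the hypothesis forces $R^{1+\Phi(R)} = R\cdot e^{\Phi(R)\log R} \geq e^{-C}R$ for small $R$, so for any $d > \overline{\dim}_\Phi\mu$ the bound $\mu(B(x,R))/\mu(B(x,r)) \leq C_2(R/r)^d$ already holds on the range $r<e^{-C}R$. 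In the remaining sliver $e^{-C}R \leq r \leq R$ the ratio $R/r$ is bounded, and a single application of the $\Phi$-bound with $r'=e^{-C}R$ yields $\mu(B(x,R))/\mu(B(x,r)) \leq C'$, a constant trivially dominated by $(R/r)^d \geq 1$. Hence $\dim_A\mu \leq \overline{\dim}_\Phi\mu$, and equality follows from (i); the lower statement is parallel.

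For (iv), the only technical step, I would combine the Frostman upper bound $\mu(B(x,R)) \leq C_1 R^{\sigma-\epsilon}$ with a uniform Minkowski lower bound $\mu(B(x,r)) \geq c_2 r^{\tau+\epsilon}$ on $\mathrm{supp}\mu$, writing $\sigma=\dim_F\mu$ and $\tau=\overline{\dim}_M\mu$. This gives $\mu(B(x,R))/\mu(B(x,r)) \leq C R^{\sigma-\epsilon}/r^{\tau+\epsilon}$, and with $u=|\log R|$, $v=|\log r|$ and $v \geq (1+\Phi(R))u$, the requirement that the ratio be at most $C'(R/r)^d$ reduces, for $d>\tau+\epsilon$, to
\begin{equation*}
(d-\tau-\epsilon)\Phi(R) \geq (\tau-\sigma) + 2\epsilon
\end{equation*}
holding for all small $R$. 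Solving using $\limsup_{R\to 0}\Phi(R)^{-1} = \Theta$ gives the admissible exponent $d = \tau + \epsilon + \Theta((\tau-\sigma)+2\epsilon)$, and letting $\epsilon\to 0$ yields the stated upper bound on $\overline{\dim}_\Phi\mu$. The lower bound on $\underline{\dim}_\Phi\mu$ is entirely symmetric, with the Frostman and Minkowski roles swapped. The main obstacle I anticipate is securing the uniform-in-$x$ pointwise form of the Minkowski estimate; this is effectively a matter of interpreting $\overline{\dim}_M\mu$ consistently with the definition used in \cite{HH} rather than any deep difficulty.
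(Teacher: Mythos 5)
The paper itself gives no proof of Proposition \ref{basic}; it explicitly defers all of these facts to \cite{HH} and the references therein, so there is no in-paper argument to compare against. Your proposal is correct and follows the standard route: monotonicity in $\Phi$ for the outer inequalities of (\ref{R1}) and for (ii), the fixed-$R$ specialization for the Frostman/Minkowski comparisons, the packing argument for the support inequality, the bounded-ratio sliver argument for (iii), and the log-linear interpolation between the uniform Frostman upper bound and Minkowski lower bound for (iv). The only points worth tightening are routine: you need $\inf_{x\in\mathrm{supp}\mu}\mu(B(x,R))>0$ for fixed $R$ (a finite-cover argument on the compact support) to run the Minkowski comparison, and in (iv) the case $\Theta=\infty$ should be noted as vacuous unless $\overline{\dim}_{M}\mu=\dim_{F}\mu$.
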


\subsection{The random set-up\label{setup}}

Let $I\subseteq \mathbb{R}^{D}$. We denote by $diam(I)$ the diameter of $I$.
Given $r>0$, we say the subset $J\subseteq I$ is an $r$\textbf{-similarity}
of $I$ if there is a similarity $S_{\sigma }$ such that $J=S_{\sigma }(I)$
and $diam(J)=r\cdot diam(I)$. (Thus $S_{\sigma }$ has contraction factor $r$%
.) We say the collection of $r$-similarities, $J_{1},...,J_{t}$, is $\tau $%
\textbf{-separated} if $d(J_{i},J_{j})\geq \tau \cdot r\cdot diam(I)$ for
all $i\neq j$. When such a collection of $t$ sets exists, we say $I$ has the 
$(t,r,\tau )$\textbf{-separation property}.

Of course, if $I$ contains a non-empty open set, and $\tau >0$ is given,
then $I$ will have the $(t,r,\tau )$-separation property for all $t\in 
\mathbb{N}$ and $r\leq r_{t},$ for some suitably small $r_{t}>0$. For
example, if $I_{0}=[0,1]\subseteq \mathbb{R}$, then $r_{t}=1/(t+\tau (t-1))$
will work. This condition can be viewed as a uniform strong separation
condition.

From here on, $I_{0}$ will denote a (fixed) compact subset of $\mathbb{R}%
^{D} $ with diameter $1$ and non-empty interior. We fix $0<\tau <1$ and for
each $t\in \mathbb{N}$ we choose $r_{t}\in (0,1/2]$ so that $I_{0}$ has $%
(t,r,\tau )$-separation property for all $r\leq r_{t}$. For each $t=2,3,...$
we define the probability simplices:%
\begin{equation*}
\mathcal{S}_{t}=\mathcal{\ }\{(x_{1},...,x_{t})\in \mathbb{R}%
^{t}:x_{i}>0,\sum_{i=1}^{t}x_{i}=1\},
\end{equation*}%
\begin{equation*}
\Omega _{t}=(0,r_{t}]\times \mathcal{S}_{t}\subseteq (0,1)\times \mathcal{S}%
_{t}\text{ }
\end{equation*}%
and 
\begin{equation*}
\text{ }\Omega _{0}=\bigcup_{t\geq 2}\Omega _{t}.
\end{equation*}%
Let $\pi $ be a Borel probability measure on $\Omega _{0}$ and let $\mathbb{P%
}$ be the product measure on the infinite product $\Omega =\Omega _{0}^{%
\mathbb{N}}$ induced by $\pi $.

Continuing, we define random variables $T$ and $r,$ and a random vector $p$
on $\Omega _{0}$. To do this, for $\omega \in \Omega _{0},$ we have $\omega
\in \Omega _{t}$ for some $t$ $=2,3,...$ with $\omega
=(r,p^{(1)},p^{(2)},...,p^{(t)})$. Using this, we define 
\begin{equation*}
T(\omega )=t\text{, }r(\omega )=r\text{, }p(\omega
)=(p^{(1)},p^{(2)},...,p^{(t)})\in \mathcal{S}_{t}\text{.}
\end{equation*}

A common way to define the measure $\pi $ on $\Omega _{0}$ is as a two-step
process where one first chooses the integer $t$ randomly according to some
distribution and then independently choose $r$ uniformly from $(0,r_{t}]$
and $p$ uniformly from $\mathcal{S}_{t}$.

With this framework, $\omega \in \Omega $ drawn according to $\mathbb{P}$
represents an independent and identically distributed random sample $%
(T_{n},r_{n},p_{n})$ from $\pi $ on $\Omega _{0}$.

Using this iid sample, we can now construct a random Moran fractal.
Beginning with the compact set $I_{0}$ and $\omega \in \Omega ,$ we select a
collection of $T_{1}(\omega )\;$subsets that are $r_{1}(\omega )$%
-similarities of $I_{0},$ $\{I_{0}^{(j)}\}_{j=1}^{T_{1}(\omega )}$, that are 
$\tau $-separated. This is possible as $r_{1}(\omega )\leq r_{T_{1}(\omega
)},$ so $I_{0}$ has the $(T_{1}(\omega ),r_{1}(\omega ),\tau )$-separation
property. We call the sets $I_{0}^{(j)},$ $j=1,...,T_{1}(\omega ),$ the
Moran sets of step (or level) 1. The Moran sets of step 1 have diameter $%
r_{1}$ and the distance between any two is at least $\tau r_{1}$.

Being similar to $I_{0},$ the sets $I_{0}^{(j)}$ also have the $(t,r,\tau )$%
-separation property for all $r\leq r_{t},$ so we may repeat this process.
Assume inductively that we have chosen the $\prod\limits_{j=1}^{n}T_{j}(%
\omega )$ Moran sets of step $n$. From each such set, $I_{n},$ we select $%
T_{n+1}(\omega )$ subsets that are $r_{n+1}(\omega )$-similarities of $I_{n}$
and are $\tau $-separated. These sets all have diameter $r_{1}\cdot \cdot
\cdot r_{n+1},$ are separated by a distance of at least $\tau r_{1}\cdot
\cdot \cdot r_{n+1}$ and are known as the \textbf{Moran sets of step }$n+1$.
The Moran sets of step $n+1$ that are subsets of a given Moran set $I_{n}$
of step $n$ are known as the children of the parent set $I_{n}$. We will use
the notation $I_{N}(x)$ for the unique Moran set of step $N$ containing $%
x\in E(\omega )$. Of course, $I_{N-1}(x)$ is its parent.

If we let $\mathcal{M}_{n}(\omega )$ be the union of the step $n$ Moran
sets, then the \textbf{random Moran set }$E(\omega )$ is the compact set 
\begin{equation*}
E(\omega )=\bigcap_{n=1}^{\infty }\mathcal{M}_{n}(\omega ).
\end{equation*}

We define a\textbf{\ random Moran measure} $\mu =\mu _{\omega }$ inductively
by the rule that if the $T_{n+1}$ children of the step $n$ Moran set $I$ are
labelled $I^{(1)},....,I^{(T_{n+1})},$ then $\mu (I^{(j)})=\mu
(I)p_{n+1}^{(j)}$ where $\mu (I_{0})=1$. The support of $\mu $ is the random
Moran set $E(\omega )$.

It is not difficult to see that the $\Phi $-dimensions of these random
measures is a tail event and hence our interest is in almost sure results.

\begin{example}
The classical middle-third Cantor set is a simple example of a Moran set
where $I_{0}=[0,1]$, $\tau =1/3$, $T_{n}(\omega )=2$ and $r_{n}(\omega )=1/3$
for all $\omega $ and $n$. The uniform Cantor measure is a special case of
this construction with probabilities $(1/2,1/2)$. More generally, any
self-similar set/measure arising from an iterated function system (IFS) $%
\{S_{j},p_{j}\}_{j=1}^{M},$ of equicontractive similarities $S_{j}$ acting
on $I_{0}$ and satisfying the strong separation condition, (meaning, the
sets $S_{j}(I_{0})$ are disjoint) and associated probabilities $p_{j},$ is a
random Moran set/measure in our sense.
\end{example}

\begin{example}
\label{1var}Another natural class of examples of random Moran sets are the
finite random 1-variable (homogeneous) models which arise from a finite
family of iterated function systems, $\{\mathcal{F}_{i}\}$, where each IFS$\ 
\mathcal{F}_{i}$ consists of equicontractive similarities acting on (the
common set) $I_{0}$ and satisfying the strong separation condition, where at
each step in the construction we randomly choose one IFS to apply at that
level. See \cite{Tr}.

More generally, our construction can be thought of as a random 1-variable
IFS construction where we have an uncountable family of IFSs with associated
probabilities.\qquad
\end{example}

Other examples are given in Section \ref{applications}.

\subsection{Preliminary Results}

Throughout the paper, we will assume $L$ is the smallest integer such that 
\begin{equation*}
2^{-L}\leq \tau /2.
\end{equation*}

First, we will verify that we can replace balls by suitable Moran sets for
the calculation of the $\Phi $-dimensions. The $\tau $-separation condition
is required for this.

\begin{lemma}
\label{BallsMoranInt}Fix $\omega \in \Omega $, $R>0$ and $x\in E(\omega )$.
Choose $N=N(\omega )$ such that 
\begin{equation*}
r_{1}(\omega )\cdot \cdot \cdot r_{N+1}(\omega )\leq R<r_{1}(\omega )\cdot
\cdot \cdot r_{N}(\omega ).
\end{equation*}%
Then 
\begin{equation*}
I_{N+1}(x)\cap E(\omega )\subseteq B(x,R)\cap E(\omega )\subseteq I_{N-L}(x)
\end{equation*}%
and hence 
\begin{equation*}
\mu (I_{N+1}(x))\leq \mu (B(x,R))\leq \mu (I_{N-L}(x)).
\end{equation*}
\end{lemma}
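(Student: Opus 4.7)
The plan is to handle the two set-inclusions separately and then deduce the measure inequalities by monotonicity, using that $\mu$ is supported on $E(\omega)$. The first inclusion $I_{N+1}(x) \cap E(\omega) \subseteq B(x,R) \cap E(\omega)$ is essentially a diameter estimate: the Moran set $I_{N+1}(x)$ is an $r_1(\omega)\cdots r_{N+1}(\omega)$-similarity of $I_0$, so it has diameter exactly $r_1(\omega)\cdots r_{N+1}(\omega) \leq R$. Since $x \in I_{N+1}(x)$, every point of $I_{N+1}(x)$ lies within distance $R$ of $x$, giving the inclusion.

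The second inclusion $B(x,R) \cap E(\omega) \subseteq I_{N-L}(x)$ is where the bulk of the work lies, and it is precisely here that the $\tau$-separation and the integer $L$ (chosen so that $2^{-L} \leq \tau/2$) enter. Suppose for contradiction that $y \in E(\omega) \cap B(x,R)$ with $y \notin I_{N-L}(x)$. Let $k \leq N-L$ be the smallest level at which $I_k(y) \neq I_k(x)$; then $I_k(x)$ and $I_k(y)$ are distinct children of the common parent $I_{k-1}(x) = I_{k-1}(y)$. By the $(T_k,r_k,\tau)$-separation of that parent's children, $d(I_k(x), I_k(y)) \geq \tau \, r_1(\omega)\cdots r_k(\omega)$, so $d(x,y)$ obeys the same bound. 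Since each $r_i \in (0,1/2]$, the product $r_1 \cdots r_k$ is decreasing in $k$, and
$$
\tau \, r_1(\omega)\cdots r_{N-L}(\omega) \;=\; \tau \cdot \frac{r_1(\omega)\cdots r_N(\omega)}{r_{N-L+1}(\omega)\cdots r_N(\omega)} \;\geq\; \tau \cdot 2^{L} \cdot r_1(\omega)\cdots r_N(\omega) \;\geq\; 2\, r_1(\omega)\cdots r_N(\omega) \;>\; 2R,
$$
using $r_{N-L+1}\cdots r_N \leq 2^{-L}$, the inequality $2^L \geq 2/\tau$, and $r_1 \cdots r_N > R$. Thus $d(x,y) > R$, contradicting $y \in B(x,R)$.

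Once both inclusions are in hand, the measure statement is immediate: since $\operatorname{supp}\mu \subseteq E(\omega)$, we have $\mu(B(x,R)) = \mu(B(x,R) \cap E(\omega))$ and $\mu(I_{N+1}(x)) = \mu(I_{N+1}(x) \cap E(\omega))$, and monotonicity of $\mu$ applied to the two inclusions produces the desired chain of inequalities. The only genuine obstacle is getting the second inclusion right, and specifically justifying that the offset $L$ is large enough: the worst case is $R$ as small as $r_1(\omega)\cdots r_{N+1}(\omega)$, whereas the $\tau$-separation at level $N-L$ only guarantees distance $\tau \, r_1(\omega)\cdots r_{N-L}(\omega)$, so one must amplify by a factor of $2^{L}$ (coming from $r_i \leq 1/2$) to defeat $\tau$, which is exactly the purpose of the condition $2^{-L} \leq \tau/2$.
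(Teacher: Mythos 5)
Your proof is correct and follows essentially the same route as the paper's: the first inclusion is the same diameter estimate, and the second rests on the identical chain $\tau\, r_1\cdots r_{N-L}\geq \tau\, 2^{L}\, r_1\cdots r_N\geq 2\,r_1\cdots r_N>2R$, the only cosmetic difference being that you argue by contradiction from a point $y\notin I_{N-L}(x)$ and the first level where the ancestries of $x$ and $y$ diverge, whereas the paper phrases it as all step-$N$ Moran sets meeting $B(x,R)$ lying in a common level-$(N-L)$ ancestor. No gaps.
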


\begin{proof}
Since $R$ is at least the diameter of any Moran set of step $N+1$, $%
I_{N+1}(x)\subseteq B(x,R)$.

Now consider all the step $N$ Moran sets that intersect $B(x,R)$. If two of
these sets, say $I^{(1)},I^{(2)},$ are subsets of different Moran sets of
level $N-k$, then the distance between these two sets, call it $\delta ,$ is
at least the minimum distance between any two level $N-k$ Moran sets and
hence is at least $\tau r_{1}\cdot \cdot \cdot r_{N-k}$. But $\tau \geq
2^{1-L}$ and $r_{j}\leq 1/2,$ hence%
\begin{equation*}
\delta \geq 2\cdot 2^{-L}r_{1}\cdot \cdot \cdot r_{N-k}\geq 2r_{1}\cdot
\cdot \cdot r_{N-k}r_{N-k+1}\cdot \cdot \cdot r_{N-k+L}\text{.}
\end{equation*}%
As $I^{(1)}$ and $I^{(2)}$ both intersect $B(x,R),$ we must have $\delta
\leq 2R<2r_{1}\cdot \cdot \cdot r_{N}$ and therefore $k<L$. Thus all the
step $N$ Moran sets that intersect $B(x,R)$ are subsets of the same Moran
set of level $N-L$, namely $I_{N-L}(x)$.
\end{proof}

Next, we introduce the positive-valued random variables 
\begin{equation*}
M_{n}=\max_{j}p_{n}^{(j)},\text{ }m_{n}=\min_{j}p_{n}^{(j)},
\end{equation*}%
\begin{equation*}
X_{n}=-\log M_{n},\text{ }Y_{n}=-\log m_{n},\text{ and }Z_{n}=-\log r_{n}.
\end{equation*}%
Of course, the collections $(M_{n})_{n},$ $(m_{n})_{n},$ $(X_{n})_{n},$ $%
(Y_{n})_{n}$ and $(Z_{n})$ are all independent and identically distributed.

Since $\sum_{j=1}^{T_{n}}p_{n}^{(j)}$ $=1$ for each $n$, $M_{n}\geq 1/T_{n}$%
, so if the number of children is bounded, then $\mathbb{E}(e^{\lambda
X_{1}})<\infty $ for any $\lambda $. More generally, $\mathbb{E(}e^{\lambda
W_{1}})<\infty $ for $W_{1}$ any of $X_{1},Y_{1}$ or $Z_{1}$ if there exists 
$\delta >0$ such that $W_{1}(\omega )>\delta $ a.s.

\medskip

\textbf{Basic Assumption}: Throughout this paper we will assume that there
exists some $A>0$ such that $\mathbb{E}(e^{\lambda Z_{1}})<\infty $ for all $%
|\lambda |\leq A$.

\bigskip

Note that this happens if and only if ${\mathbb{E}}(r_{1}^{-A})<\infty ,$ as
is true, for example, when the ratios $r_{n}$ are uniformly distributed.
This is an important assumption because we will make heavy use of the
following probabilistic result, sometimes known as the Chernoff technique.

\begin{theorem}
\label{prob}(\cite[Thm. 2.6]{Pe}) Suppose $(F_{n})$ are iid rv's and for
some $A>0$, $\mathbb{E}(e^{\lambda F_{1}})<\infty $ for all $|\lambda |\leq
A $. Then for all $a>0,$ there exists $b>0$ such that 
\begin{equation*}
\mathbb{P}\left( \left\vert \sum_{j=1}^{k}F_{j}-kE(F_{1})\right\vert \geq
ak\right) \leq \exp (-bk)
\end{equation*}%
for all $k\in \mathbb{N}$.
\end{theorem}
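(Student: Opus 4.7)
The plan is the standard Chernoff large-deviation technique: for each one-sided tail, apply Markov's inequality to an exponential moment of the centered sum, factorize via independence, and then choose the Laplace parameter optimally.

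First I would reduce to the mean-zero case. Because $\mathbb{E}(e^{\lambda F_1})<\infty$ on $[-A,A]$ all moments of $F_1$ are finite, so setting $G_j=F_j-\mathbb{E}(F_1)$ produces iid variables with $\mathbb{E}(G_1)=0$ and $\mathbb{E}(e^{\lambda G_1})=e^{-\lambda\mathbb{E}(F_1)}\mathbb{E}(e^{\lambda F_1})<\infty$ for $|\lambda|\le A$. With $S_k=\sum_{j=1}^k G_j$, decompose $\mathbb{P}(|S_k|\ge ak)\le \mathbb{P}(S_k\ge ak)+\mathbb{P}(-S_k\ge ak)$ and handle the two tails identically.

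For $0<\lambda\le A$, Markov's inequality together with independence gives
\begin{equation*}
\mathbb{P}(S_k\ge ak)\;\le\;e^{-\lambda ak}\,\mathbb{E}(e^{\lambda S_k})\;=\;\exp\bigl(k[\phi(\lambda)-\lambda a]\bigr),
\end{equation*}
where $\phi(\lambda):=\log \mathbb{E}(e^{\lambda G_1})$; the lower tail is handled analogously with $\lambda\in[-A,0)$. The crux is to exhibit a $\lambda$ making the bracket strictly negative. Finiteness of the moment generating function on an open interval around $0$ implies that $\phi$ is real-analytic there with $\phi(0)=0$ and $\phi'(0)=\mathbb{E}(G_1)=0$, whence $\phi(\lambda)=O(\lambda^2)$ as $\lambda\to 0$. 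Therefore $\phi(\lambda)-\lambda a<0$ for all sufficiently small $\lambda>0$, and symmetrically for small $\lambda<0$. Fixing such $\lambda_\pm$ and setting
\begin{equation*}
b\;=\;\tfrac{1}{2}\min\bigl(\lambda_+ a-\phi(\lambda_+),\;-\lambda_- a-\phi(\lambda_-)\bigr)\;>\;0
\end{equation*}
gives the claimed bound uniformly in $k$.

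The only slightly subtle step is justifying the smoothness of $\phi$ at $0$, which comes from differentiating under the expectation (the domination needed is precisely the finiteness of the MGF on a neighborhood of $0$). Nothing problem-specific enters; the estimate is a textbook large-deviation bound that the authors invoke because it is about to be applied to the iid sequences $(X_j),(Y_j),(Z_j)$ built from the random sample.
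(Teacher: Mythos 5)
Your argument is correct and is the standard Chernoff/Cram\'er upper bound; the paper gives no proof of this statement at all, quoting it from \cite[Thm. 2.6]{Pe} and explicitly calling it ``the Chernoff technique,'' so you have supplied precisely the argument being invoked. The one genuinely delicate step, smoothness of $\phi(\lambda)=\log \mathbb{E}(e^{\lambda G_{1}})$ at the origin so that $\phi(0)=\phi'(0)=0$ and $\phi(\lambda)=O(\lambda^{2})$, is adequately justified by finiteness of the moment generating function on a neighbourhood of $0$ (differentiation under the expectation is dominated by $|x|e^{\lambda x}\le C(e^{\lambda' x}+e^{-\lambda' x})$ for $|\lambda|<\lambda'<A$). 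One cosmetic caveat: summing the two tails produces a factor $2$, so your choice of $b$ yields $\mathbb{P}(|S_{k}|\ge ak)\le 2e^{-2bk}$, which is $\le e^{-bk}$ only once $k\ge (\log 2)/b$; for the finitely many remaining $k$ one either shrinks $b$ further (using that the relevant probabilities are less than $1$) or simply tolerates a harmless multiplicative constant, which is all that the paper's Borel--Cantelli applications of this bound require.
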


\begin{remark}
(i) If $W$ is a non-negative random variable with $\mathbb{P}(W\leq x)\leq
Cx^{\theta }$ for small $x$ and $\theta >0$, then ${\mathbb{E}}(e^{-\lambda
\log W})=\mathbb{E(}W^{-\lambda })<\infty $ for small $|\lambda |$. In
particular this holds if $W$ has a probability density function $f(x)$ with $%
f(x)\leq Cx^{q}$ and $q>-1$.

(ii) Since $\{p\in {\mathcal{S}}_{t}:\min_{i}p_{i}\leq z\}\subseteq
(\partial {\mathcal{S}}_{t})_{2z}$ (the $2z$-dilation of the boundary of ${%
\mathcal{S}}_{t}$), part (i) means it is enough that $\mathbb{P}((\partial {%
\mathcal{S}}_{t})_{z})\leq Cz^{\theta }$ for some $C,\theta >0$ and small $z$
to have ${\mathbb{E}}(e^{\lambda Y})<\infty $ for small $\left\vert \lambda
\right\vert $. Thus if the probability distribution on ${\mathcal{S}}_{t}$
has bounded density function, this will be true.

(iii) Since $\max_{i}p_{i}\geq 1/t$ for $p\in {\mathcal{S}}_{t}$, we know
that $M_{n}$ is close to zero only when $T_{n}$, the number of children, is
very large. Very roughly, $\mathbb{P}(M_{n}\leq \lambda )\leq \mathbb{P}%
(T_{n}\geq 1/\lambda )$. If ${\mathbb{P}}(T=t)\leq Ct^{-\theta }$ with $%
\theta >1$ and the distribution on ${\mathcal{S}}_{t}$ is uniform, then ${%
\mathbb{E}}(e^{\lambda X})<\infty $ for small $\left\vert \lambda
\right\vert $. Thus this can happen even if ${\mathbb{E}}(T)=\infty $ (take
any $\theta \in (1,2)$).
\end{remark}

\begin{notation}
Given a dimension function $\Phi $ and random Moran set $E(\omega ),$ we
define the associated \textbf{depth function} $\phi =\phi _{\omega }:\mathbb{%
N\rightarrow N}$ by the rule that $\phi _{\omega }(n)$ is the minimal
positive integer $k$ such that 
\begin{equation*}
r_{1}(\omega )\cdot \cdot \cdot r_{n+k}(\omega )\leq \left( r_{1}\cdot \cdot
\cdot r_{n}\right) ^{1+\Phi (r_{1}\cdot \cdot \cdot r_{n})}.
\end{equation*}
\end{notation}

Consequently, 
\begin{eqnarray}
r_{1}\cdot \cdot \cdot r_{n+\phi (n)} &\leq &(r_{1}\cdot \cdot \cdot
r_{n})^{1+\Phi (r_{1}\cdot \cdot \cdot r_{n})}\text{ and}  \label{rphi} \\
r_{1}\cdot \cdot \cdot r_{n+\phi (n)-1} &>&(r_{1}\cdot \cdot \cdot
r_{n})^{1+\Phi (r_{1}\cdot \cdot \cdot r_{n})}.  \label{rphiupper}
\end{eqnarray}%
This notion was introduced in \cite{GHM} to study the formulas for the $\Phi 
$-dimensions of (deterministic) Cantor sets. There it was shown that if $C$
is the central Cantor set with intervals of length $r_{1}\cdot \cdot \cdot
r_{n}$ at step $n$ and $\inf_{n}r_{n}>0$, then 
\begin{eqnarray*}
\overline{\dim }_{\Phi }C &=&\limsup_{n}\left( \sup_{k\geq \phi (n)}\frac{%
n\log 2}{\log r_{n+1}\cdot \cdot \cdot r_{n+k}}\right) , \\
\underline{\dim }_{\Phi }C &=&\liminf_{n}\left( \inf_{k\geq \phi (n)}\frac{%
n\log 2}{\log r_{n+1}\cdot \cdot \cdot r_{n+k}}\right) .
\end{eqnarray*}%
In this situation, there is a simple relationship between $\phi $ and $\Phi $%
: with $C=\inf r_{n}/\sup r_{n}$ we have $\phi (n)-1\leq Cn\Phi (r_{1}\cdot
\cdot \cdot r_{n})\leq \phi (n)$.

For the random problem, it will be helpful to have information about the
size of $\phi _{\omega }(n)$ that is independent of $\omega $ (in an almost
sure sense). As we will see in the next result, the answer depends on how $%
\Phi (x)$ compares with the function $\log |\log x|/\left\vert \log
x\right\vert $.

\begin{notation}
Given functions $G,H:(0,1)\rightarrow \mathbb{R}^{+},$ we define%
\begin{equation}
\zeta _{N}^{(G)}=\frac{G(2^{-N})\log (N\log 2)}{2\mathbb{E}\mathbf{(}Z_{1})}
\label{zeta}
\end{equation}%
and 
\begin{equation}
\chi _{N}^{(H)}=\frac{H(2^{-N})\log (2N\mathbb{E}\mathbf{(}Z_{1}))}{\log 2}%
\text{.}  \label{chi}
\end{equation}
\end{notation}

Apply Theorem \ref{prob} to choose a constant $B$ so that for all $k\in 
\mathbb{N}$,%
\begin{equation}
\mathbb{P}\left( \left\vert \sum_{j=1}^{k}Z_{j}-k\mathbb{E}%
(Z_{1})\right\vert \geq k\mathbb{E}(Z_{1})\right) \leq \exp (-Bk).
\label{PetrovB}
\end{equation}

\begin{lemma}
\label{phi}(i) Suppose $\Phi (x)\geq G(x)\log \left\vert \log x\right\vert
/\left\vert \log x\right\vert $ where $G$ is non-decreasing as $x$ decreases
to $0$ and $G(x)\geq 4\mathbb{E}(Z_{1})/B$ for all $x\in (0,1)$. Then 
\begin{equation*}
\mathbb{P}(\omega :\phi _{\omega }(N)<\zeta _{N}^{(G)}\text{ }i.o.)=0.
\end{equation*}

(ii) If $\Phi (x)\leq H(x)\log \left\vert \log x\right\vert /\left\vert \log
x\right\vert $ where $H(x)$ is non-increasing as $x$ decreases to $0,$ then $%
\mathbb{P}(\omega :\phi _{\omega }(N)>\chi _{N}^{(H)}$ $i.o.)=0.$
\end{lemma}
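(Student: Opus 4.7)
The plan is to combine the Chernoff-type bound \eqref{PetrovB} with Borel--Cantelli. Writing $s_n = r_1 \cdots r_n$ and observing that $\sum_{j=1}^n Z_j = -\log s_n = |\log s_n|$, the depth function is recast as
\[
\phi_\omega(n) \;=\; \min\Bigl\{k \in \mathbb{N} : \sum_{j=n+1}^{n+k} Z_j \,\geq\, \Phi(s_n)\,|\log s_n|\Bigr\}.
\]
Because each $r_j \leq 1/2$ by construction, the deterministic lower bounds $Z_j \geq \log 2$ and $|\log s_N| \geq N \log 2$ hold. These bounds, together with the monotonicity hypotheses on $G$ and $H$, are precisely what let me replace the random scale $s_N$ by the deterministic scale $2^{-N}$ appearing inside $\zeta_N^{(G)}$ and $\chi_N^{(H)}$.

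For (i), take $K_N$ to be the largest integer strictly less than $\zeta_N^{(G)}$, so that $\{\phi_\omega(N) < \zeta_N^{(G)}\}$ is contained in $\{\sum_{j=N+1}^{N+K_N} Z_j \geq \Phi(s_N)|\log s_N|\}$. Using the hypothesis $\Phi(s_N) \geq G(s_N)\log|\log s_N|/|\log s_N|$, the inequality $G(s_N) \geq G(2^{-N})$ (because $s_N \leq 2^{-N}$ and $G$ is non-decreasing as $x \downarrow 0$), and $|\log s_N| \geq N \log 2$, I obtain
\[
\Phi(s_N)\,|\log s_N| \;\geq\; G(2^{-N})\log(N\log 2) \;=\; 2\mathbb{E}(Z_1)\,\zeta_N^{(G)}.
\]
On the other hand, \eqref{PetrovB} applied to the iid block $Z_{N+1},\ldots,Z_{N+K_N}$ yields $\sum_{j=N+1}^{N+K_N} Z_j \leq 2K_N\mathbb{E}(Z_1) < 2\mathbb{E}(Z_1)\,\zeta_N^{(G)}$ outside an exceptional set of probability at most $e^{-BK_N}$. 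These two bounds are incompatible, so $\mathbb{P}(\phi_\omega(N) < \zeta_N^{(G)}) \leq e^{-BK_N}$. The hypothesis $G(x) \geq 4\mathbb{E}(Z_1)/B$ then forces $BK_N \geq 2\log(N\log 2) - O(1)$, making these probabilities summable (they decay like $N^{-2}$), and Borel--Cantelli closes (i).

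For (ii), set $L_N = \lceil \chi_N^{(H)}\rceil$, so that $\{\phi_\omega(N) > \chi_N^{(H)}\}$ is contained in $\{\sum_{j=N+1}^{N+L_N} Z_j < \Phi(s_N)|\log s_N|\}$. The left-hand side is deterministically at least $L_N\log 2 \geq \chi_N^{(H)}\log 2 = H(2^{-N})\log(2N\mathbb{E}(Z_1))$. The right-hand side simplifies to $\Phi(s_N)|\log s_N| \leq H(s_N)\log|\log s_N| \leq H(2^{-N})\log|\log s_N|$ by the hypothesis on $\Phi$ and the monotonicity of $H$, and \eqref{PetrovB} applied to $Z_1,\ldots,Z_N$ gives $|\log s_N| \leq 2N\mathbb{E}(Z_1)$ outside a set of probability at most $e^{-BN}$. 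On the Chernoff-good event the right-hand side is therefore bounded by $H(2^{-N})\log(2N\mathbb{E}(Z_1))$, which does not exceed the left-hand side, so the required strict inequality fails. Hence $\mathbb{P}(\phi_\omega(N) > \chi_N^{(H)}) \leq e^{-BN}$, which is summable, and Borel--Cantelli closes (ii).

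The main technical nuisance I anticipate is the strict-versus-non-strict issue at the boundary (when $\zeta_N^{(G)}$ or $\chi_N^{(H)}$ happens to be an integer and the Chernoff and structural bounds meet exactly). This is easily absorbed by applying Theorem~\ref{prob} with deviation parameter $a = (1-\varepsilon)\mathbb{E}(Z_1)$ for some small $\varepsilon > 0$, which produces the required gap at the cost of a slightly smaller constant $B$ and does not disturb summability. Beyond this, the argument is essentially bookkeeping; the one substantive step is the monotonicity-driven exchange $s_N \leftrightarrow 2^{-N}$, which decouples the purely probabilistic large-deviation estimate from the deterministic structure of the dimension function.
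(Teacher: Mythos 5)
Your argument is correct and follows essentially the same route as the paper's proof: both recast the events $\{\phi _{\omega }(N)<\zeta _{N}^{(G)}\}$ and $\{\phi _{\omega }(N)>\chi _{N}^{(H)}\}$ as large-deviation events for blocks of the iid sums $\sum Z_{j}$ via the defining inequalities for $\phi $, use the monotonicity of $G$ and $H$ together with $r_{1}\cdots r_{N}\leq 2^{-N}$ and $Z_{j}\geq \log 2$ to pass to the deterministic scale, and then apply (\ref{PetrovB}) and Borel--Cantelli, with $G\geq 4\mathbb{E}(Z_{1})/B$ supplying the $N^{-2}$ summability in part (i). The integer-rounding of $\zeta _{N}$ and $\chi _{N}$ that you flag at the end is treated no more carefully in the paper, which simply uses these quantities as indices, so your handling is if anything slightly more explicit.
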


\begin{proof}
(i) If $\phi _{\omega }(N)<\zeta _{N}^{(G)}=\zeta _{N}$, then 
\begin{equation*}
r_{1}r_{2}\cdot \cdot \cdot r_{N+\zeta _{N}}\leq r_{1}r_{2}\cdot \cdot \cdot
r_{N+\phi _{\omega }(N)}\leq (r_{1}\cdot \cdot \cdot r_{N})^{1+\Phi
(r_{1}\cdot \cdot \cdot r_{N})},
\end{equation*}%
so%
\begin{equation*}
r_{N+1}r_{N+2}\cdot \cdot \cdot r_{N+\zeta _{N}}\leq (r_{1}\cdot \cdot \cdot
r_{N})^{\Phi (r_{1}\cdot \cdot \cdot r_{N})}
\end{equation*}%
and therefore 
\begin{equation*}
\sum_{j=N+1}^{N+\zeta _{N}}-\log r_{j}\geq \Phi (r_{1}\cdot \cdot \cdot
r_{N})\left\vert \log r_{1}\cdot \cdot \cdot r_{N}\right\vert \geq
G(r_{1}\cdot \cdot \cdot r_{N})\log \left\vert \log r_{1}\cdot \cdot \cdot
r_{N}\right\vert .
\end{equation*}%
But $r_{1}\cdot \cdot \cdot r_{N}\leq 2^{-N},$ so by monotonicity, $%
G(r_{1}\cdot \cdot \cdot r_{N})\geq G(2^{-N})$ and $\left\vert \log
r_{1}\cdot \cdot \cdot r_{N}\right\vert \geq N\log 2$. Hence if $\phi
_{\omega }(N)<\zeta _{N},$ then 
\begin{equation*}
\sum_{j=N+1}^{N+\zeta _{N}}Z_{j}=\sum_{j=N+1}^{N+\zeta _{N}}-\log r_{j}\geq
G(2^{-N})\log (N\log 2)=2\zeta _{N}\mathbb{E}(Z_{1}),
\end{equation*}%
so the choice of $B$ gives 
\begin{align*}
\mathbb{P}(\omega :\phi _{\omega }(N)<\zeta _{N})& =\mathbb{P}\left(
\sum_{j=1}^{\zeta _{N}}Z_{j}\geq 2\zeta _{N}\mathbb{E}(Z_{1})\right) \\
& \leq \mathbb{P}\left( \left\vert \sum_{j=1}^{\zeta _{N}}Z_{j}-\zeta _{N}%
\mathbb{E}(Z_{1})\right\vert \geq \zeta _{N}\mathbb{E}(Z_{1})\right) \leq
\exp (-B\zeta _{N}).
\end{align*}%
Since $G(2^{-N})\geq $ $4\mathbb{E}(Z_{1})/B$, there is a constant $c$ such
that 
\begin{equation}
\exp (-B\zeta _{N})=(N\log 2)^{-BG(2^{-N})/2\mathbb{E}(Z_{1})}\leq cN^{-2}.
\label{summing}
\end{equation}%
Thus 
\begin{equation*}
\sum_{N}\mathbb{P}(\omega :\phi _{\omega }(N)<\zeta _{N})\leq
c\sum_{N}N^{-2}<\infty
\end{equation*}%
and the Borel Cantelli lemma implies that $\mathbb{P}(\omega :\phi _{\omega
}(N)<\zeta _{N}^{(G)})$ i.o.$)=0$.

(ii) The arguments are similar, but easier, for (ii). If $\phi _{\omega
}(N)>\chi _{N}^{(H)}$, then 
\begin{equation*}
r_{1}\cdot \cdot \cdot r_{N+\chi _{N}}\geq r_{1}\cdot \cdot \cdot r_{N+\phi
(N)-1}\geq (r_{1}\cdot \cdot \cdot r_{N})^{1+\Phi (r_{1}\cdot \cdot \cdot
r_{N})}.
\end{equation*}%
Hence 
\begin{eqnarray*}
\chi _{N}^{(H)}\log 2 &\leq &-\log r_{N+1}\cdot \cdot \cdot r_{N+\chi
_{N}}\leq -\Phi (r_{1}\cdot \cdot \cdot r_{N})\log (r_{1}\cdot \cdot \cdot
r_{N}) \\
&=&H(r_{1}\cdot \cdot \cdot r_{N})\log \left\vert \log r_{1}\cdot \cdot
\cdot r_{N}\right\vert \leq H(2^{-N})\log \left\vert \log r_{1}\cdot \cdot
\cdot r_{N}\right\vert .
\end{eqnarray*}%
Putting in the formula for $\chi _{N}^{(H)}$ and taking exponentials gives%
\begin{equation*}
2N\mathbb{E}(Z_{1})\leq \left\vert \log r_{1}\cdot \cdot \cdot
r_{N}\right\vert =\sum_{i=1}^{N}-\log r_{i}.
\end{equation*}%
Applying Theorem \ref{prob} again, we obtain 
\begin{equation*}
\mathbb{P(}\phi _{\omega }(N)>\chi _{N}^{(H)})\leq \mathbb{P}\left(
\sum_{i=1}^{N}Z_{i}\geq 2N\mathbb{E}(Z_{1})\right) \leq \exp (-BN)
\end{equation*}%
and the Borel Cantelli lemma gives the result.
\end{proof}

\section{Dimension Results for Large $\Phi $\label{largesection}}

\textbf{Terminology}: We call a dimension function $\Phi $ \textbf{large} if
it satisfies $\Phi (x)\gg \log |\log x|/|\log x|$.

The constant functions $\Phi =\delta >0$ or any dimension function $\Phi
\rightarrow \infty $ are examples of large dimension functions.

\begin{theorem}
\label{MainLarge} There is a set $\Gamma ,$ of full measure in $\Omega ,$
with the following properties:

(i) If there exists $A>0$ such that $\mathbb{E(}e^{\lambda X_{1}})<\infty $
for all $\left\vert \lambda \right\vert \leq A$, then \underline{$\dim $}$%
_{\Phi }\mu _{\omega }=\mathbb{E}(X_{1})/\mathbb{E(}Z_{1})$ for all large
dimension functions $\Phi $ and for all $\omega \in \Gamma $.

\smallskip

(ii) If there exists $A>0$ such that $\mathbb{E(}e^{\lambda Y_{1}})<\infty $
for all $\left\vert \lambda \right\vert \leq A$, then $\overline{\dim }%
_{\Phi }\mu _{\omega }=\mathbb{E}(Y_{1})/\mathbb{E(}Z_{1})$ for all large
dimension functions $\Phi $ and for all $\omega \in \Gamma $.
\end{theorem}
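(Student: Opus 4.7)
The plan is to transfer the defining ball-ratio inequalities to inequalities about products of probabilities and contraction ratios along the Moran descent via Lemma~\ref{BallsMoranInt}, and then to estimate those products using the Chernoff bound of Theorem~\ref{prob} combined with the depth control of Lemma~\ref{phi}. Given $\omega$, $x \in E(\omega)$, and scales $0 < r < R^{1+\Phi(R)} \le R$, choose $N, N'$ with $r_1\cdots r_{N+1}\le R<r_1\cdots r_N$ and the analogous inequalities for $r$; the condition on $r$ forces $N'-N\ge \phi_\omega(N)-1$. Along the branch $(\sigma_j)$ to $x$,
\[
\frac{\mu(I_a(x))}{\mu(I_b(x))} = \prod_{j=a+1}^{b}\bigl(p_j^{(\sigma_j)}\bigr)^{-1},
\qquad -\log M_j = X_j \le -\log p_j^{(\sigma_j)} \le Y_j = -\log m_j,
\]
while $\log(R/r)$ is comparable up to $O(1)$ to $\sum_{j=N+1}^{N'+1} Z_j$.

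For the lower bound $\underline{\dim}_\Phi \mu_\omega \ge \mathbb{E}(X_1)/\mathbb{E}(Z_1)$ I would fix $d$ strictly below this value and exploit $-\log p_j^{(\sigma_j)} \ge X_j$, which holds for \emph{every} branch. Applying Theorem~\ref{prob} to $X_j$ and $Z_j$ yields constants $b,\epsilon>0$ with $\mathbb{P}(|\sum_{j=1}^k X_j - k\mathbb{E}(X_1)|\ge \epsilon k)\le e^{-bk}$, and similarly for $Z_j$; summing these tails over all starting points $N$ and depths $k\ge \phi_\omega(N)$ and invoking Borel--Cantelli reduces the required $\mu(B(x,R))/\mu(B(x,r))\ge C_2(R/r)^d$ to the concentration inequality $\sum X_j \ge d\sum Z_j + O(1)$. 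For the matching $\underline{\dim}_\Phi \mu_\omega \le \mathbb{E}(X_1)/\mathbb{E}(Z_1)$, by compactness I would select $x^\star\in E(\omega)$ whose level-$j$ child always has maximum probability $M_j$, making the Moran upper bound $\mu(I_{N-L}(x^\star))/\mu(I_{N'+1}(x^\star))=\prod M_j^{-1}$ sharp; for $d>\mathbb{E}(X_1)/\mathbb{E}(Z_1)$ the same Chernoff estimate gives $\mu(B(x^\star,R))/\mu(B(x^\star,r)) < C_2(R/r)^d$, violating the defining inequality of the lower $\Phi$-dimension at $x^\star$. Part~(ii) is entirely symmetric: use $-\log p_j^{(\sigma_j)}\le Y_j$ together with a minimum-probability branch $x_\star$.

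The principal obstacle is uniformity across the continuum of admissible scales $(R,r)$ and producing a \emph{single} full-measure $\Gamma$ working for every large $\Phi$ and for both parts simultaneously. Borel--Cantelli requires $\sum_N e^{-b\phi_\omega(N)}<\infty$, hence $\phi_\omega(N)\gtrsim c\log N$ for some $c>1/b$. This is precisely what ``$\Phi$ large'' delivers through Lemma~\ref{phi}(i): for each integer $n\ge 4\mathbb{E}(Z_1)/B$, applying the lemma to the auxiliary dimension function $n\Psi$ with constant $G\equiv n$ yields a full-measure event $\Gamma_n$ (depending only on $n$ and the $Z_j$'s, not on $\Phi$) on which the corresponding depth exceeds $\zeta_N^{(n)}\sim (n/2\mathbb{E}(Z_1))\log N$ eventually. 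Because any large $\Phi$ dominates $n\Psi$ for sufficiently small $x$, the depth $\phi_\omega(N)$ associated with $\Phi$ also dominates $\zeta_N^{(n)}$ eventually on $\Gamma_n$. Intersecting $\bigcap_n \Gamma_n$ with the full-measure Chernoff event (the latter obtained by a Borel--Cantelli argument applied to a countable family of tolerances $\epsilon$) yields the single set $\Gamma$ that serves every large dimension function in both parts of the theorem.
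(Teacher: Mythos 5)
Your proposal follows essentially the same route as the paper's proof: reduce ball ratios to products of branch probabilities via Lemma~\ref{BallsMoranInt}, trap these between $\prod M_j^{-1}$ and $\prod m_j^{-1}$, control them with the Chernoff bound (Theorem~\ref{prob}) and Borel--Cantelli uniformly over depths $k\ge \zeta_N$ (with Lemma~\ref{phi} guaranteeing $\phi_\omega(N)\ge \zeta_N$ on a full-measure set), use extremal-probability branches for the reverse inequalities, and intersect countably many full-measure events over integer parameters and tolerances to obtain a single $\Gamma$ serving all large $\Phi$. The only imprecision is the claim that $\log(R/r)$ equals $\sum_{j=N+1}^{N'+1}Z_j+O(1)$ --- the endpoint terms $Z_{N+1},Z_{N'+1}$ need not be bounded --- but this is harmless since, exactly as in the paper's Lemmas~\ref{L1} and~\ref{L2}, one compares products over suitably offset index ranges and lets the $\varepsilon$-slack in the Chernoff estimates absorb the discrepancy.
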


Since positive constant functions are large dimension functions, the
following corollary is immediate.

\begin{corollary}
If there exists $A>0$ such that $\mathbb{E(}e^{\lambda X_{1}})$, $\mathbb{E(}%
e^{\lambda Y_{1}})<\infty $ for all $\left\vert \lambda \right\vert \leq A$,
then $\dim _{qL}\mu _{\omega }=\dim _{F}\mu _{\omega }=$ $\mathbb{E}(X_{1})/%
\mathbb{E(}Z_{1})$ and $\dim _{qA}\mu _{\omega }=\overline{\dim }_{M}\mu
_{\omega }=\mathbb{E}(Y_{1})/\mathbb{E(}Z_{1})$ almost surely.
\end{corollary}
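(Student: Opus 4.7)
The plan is to apply Theorem~\ref{MainLarge} to two different kinds of large dimension function and then extract the corollary's four equalities using the basic comparisons recorded earlier in the paper.

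First I would handle the quasi-Assouad and quasi-lower identifications. Every positive constant is a large dimension function, so in particular the $\theta$-Assouad spectrum exponent $\Phi_{\theta}(x)\equiv 1/\theta-1$ is large for each $\theta\in(0,1)$. Since the full-measure set $\Gamma$ produced by Theorem~\ref{MainLarge} is uniform in $\Phi$, this would give, for every such $\theta$ and every $\omega\in\Gamma$,
\[
\underline{\dim}_{\Phi_{\theta}}\mu_{\omega}=\mathbb{E}(X_{1})/\mathbb{E}(Z_{1})
\quad\text{and}\quad
\overline{\dim}_{\Phi_{\theta}}\mu_{\omega}=\mathbb{E}(Y_{1})/\mathbb{E}(Z_{1}).
\]
Passing to $\theta\to 1^{-}$ in the defining formulas $\dim_{qA}\mu=\lim_{\theta\to 1}\overline{\dim}_{\Phi_{\theta}}\mu$ and $\dim_{qL}\mu=\lim_{\theta\to 1}\underline{\dim}_{\Phi_{\theta}}\mu$ would then yield the quasi-Assouad equalities.

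Next, to pin down the Minkowski and Frostman values, I would re-apply Theorem~\ref{MainLarge} with a large dimension function that additionally satisfies $\Phi(x)\to\infty$ as $x\to 0^{+}$, for instance $\Phi(x)=\sqrt{|\log x|}$. The preliminary remarks already note that $\overline{\dim}_{M}\mu$ and $\dim_{F}\mu$ coincide with the upper and lower $\Phi$-dimensions respectively whenever $\Phi\to\infty$, so the same invocation of Theorem~\ref{MainLarge} would immediately deliver $\overline{\dim}_{M}\mu_{\omega}=\mathbb{E}(Y_{1})/\mathbb{E}(Z_{1})$ and $\dim_{F}\mu_{\omega}=\mathbb{E}(X_{1})/\mathbb{E}(Z_{1})$ on the same $\Gamma$.

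I do not anticipate a real obstacle: the whole argument is a bookkeeping reduction. The one point that requires care is that a single almost-sure set $\Gamma$ must serve every choice of $\Phi$ considered above simultaneously; this is precisely the uniform-in-$\Phi$ statement already built into Theorem~\ref{MainLarge}.
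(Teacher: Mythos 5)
Your argument is correct and is exactly the intended one: the paper dismisses this corollary as ``immediate'' from Theorem~\ref{MainLarge} because positive constants (hence each $\Phi_{\theta}$) and functions tending to infinity are both large dimension functions, and you have simply filled in those two instantiations together with the limit $\theta\to 1$ and the Minkowski/Frostman identification from Proposition~\ref{basic}. Your observation that the single full-measure set $\Gamma$ works uniformly over all large $\Phi$ is precisely the point that makes the deduction go through.
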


We will abbreviate the statement `there exists $A>0$ such that $\mathbb{E(}%
e^{\lambda W})<\infty $ for all $\left\vert \lambda \right\vert \leq A\,$'
by `$\mathbb{E(}e^{\lambda W})<\infty $ for small $\left\vert \lambda
\right\vert $' and set 
\begin{equation*}
\underline{d}=\mathbb{E}(X_{1})/\mathbb{E(}Z_{1})\text{, \quad }\overline{d}=%
\mathbb{E}(Y_{1})/\mathbb{E(}Z_{1}).\ 
\end{equation*}%
We begin with two lemmas. We recall that $\zeta _{N}^{(\cdot )}$ was defined
in (\ref{zeta}).

\begin{lemma}
\label{L1}(i) Assume $\mathbb{E(}e^{\lambda X_{1}})<\infty $ for small $%
\left\vert \lambda \right\vert $. For each $\varepsilon >0,$ there is a
constant $K_{\varepsilon }\geq 1$ such that if 
\begin{equation*}
F_{N}(\varepsilon )=\{\omega :\exists k\geq \zeta _{N}^{(K_{\varepsilon })}-1%
\text{ with }\prod\limits_{i=N+2}^{N+k-L-1}M_{i}{}^{-1}<\prod%
\limits_{i=N+1}^{N+k}r_{i}{}^{-\underline{d}(1-\varepsilon )}\},
\end{equation*}%
then $\mathbb{P}(F_{N}(\varepsilon )$ i.o.$)=0$.

(ii) Assume $\mathbb{E(}e^{\lambda Y_{1}})<\infty $ for small $\left\vert
\lambda \right\vert $. For each $\varepsilon >0,$ there is a constant $%
K_{\varepsilon }\geq 1$ such that if 
\begin{equation*}
G_{N}(\varepsilon )=\{\omega :\exists k\geq \zeta _{N}^{(K_{\varepsilon })}-1%
\text{ with }\prod\limits_{i=N-L+1}^{N+k}m_{i}{}^{-1}>\prod%
\limits_{i=N+2}^{N+k-1}r_{i}{}^{-\overline{d}(1+\varepsilon )}\},
\end{equation*}%
then $\mathbb{P}(G_{N}(\varepsilon )$ i.o.$)=0$.
\end{lemma}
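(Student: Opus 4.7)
The plan is to use Chernoff-type large deviation bounds (Theorem \ref{prob}) together with Borel--Cantelli. I will handle part (i) in detail; part (ii) is symmetric with $Y_i$ in place of $X_i$ and the reversed inequality.

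Taking logarithms, the event inside $F_N(\varepsilon)$ at stage $k$ is
\begin{equation*}
\sum_{i=N+2}^{N+k-L-1} X_i \;<\; \underline{d}(1-\varepsilon)\sum_{i=N+1}^{N+k} Z_i.
\end{equation*}
Set $S_k = \underline{d}(1-\varepsilon)\sum_{i=N+1}^{N+k} Z_i - \sum_{i=N+2}^{N+k-L-1} X_i$. Since $\underline{d}\,\mathbb{E}(Z_1) = \mathbb{E}(X_1)$, a direct computation gives
\begin{equation*}
\mathbb{E}(S_k) = -\varepsilon k\,\mathbb{E}(X_1) + (L+2)\,\mathbb{E}(X_1),
\end{equation*}
which is bounded above by $-\tfrac{\varepsilon}{2} k\,\mathbb{E}(X_1)$ once $k$ is larger than some fixed $k_0(\varepsilon, L)$. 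Thus the event $\{S_k > 0\}$ is a large deviation of order $\varepsilon k$ away from the mean. The hypotheses $\mathbb{E}(e^{\lambda X_1})<\infty$ and $\mathbb{E}(e^{\lambda Z_1})<\infty$ for small $|\lambda|$ mean Theorem \ref{prob} applies to the linear combination $S_k$, yielding a constant $b = b(\varepsilon)>0$ with
\begin{equation*}
\mathbb{P}(S_k > 0) \;\leq\; \exp(-bk) \qquad \text{for all } k \geq k_0(\varepsilon, L).
\end{equation*}

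Next, I union-bound over $k \geq \zeta_N^{(K_\varepsilon)}-1$:
\begin{equation*}
\mathbb{P}(F_N(\varepsilon)) \;\leq\; \sum_{k \geq \zeta_N^{(K_\varepsilon)}-1}\!\!\exp(-bk)
\;\leq\; C\exp\!\bigl(-b\,\zeta_N^{(K_\varepsilon)}\bigr).
\end{equation*}
Plugging in the definition \eqref{zeta} of $\zeta_N^{(K_\varepsilon)} = K_\varepsilon\log(N\log 2)/(2\mathbb{E}(Z_1))$ turns the exponential bound into a polynomial one:
\begin{equation*}
\mathbb{P}(F_N(\varepsilon)) \;\leq\; C\,(N\log 2)^{-bK_\varepsilon/(2\mathbb{E}(Z_1))}.
\end{equation*}
Now choose $K_\varepsilon$ large enough that $bK_\varepsilon/(2\mathbb{E}(Z_1)) > 1$ (for instance, $K_\varepsilon = \lceil 4\mathbb{E}(Z_1)/b\rceil$). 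Then $\sum_N \mathbb{P}(F_N(\varepsilon)) < \infty$, and the Borel--Cantelli lemma gives $\mathbb{P}(F_N(\varepsilon)\ \text{i.o.}) = 0$, as required.

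For part (ii), I repeat the same argument with $Y_i = -\log m_i$ replacing $X_i$, $\overline{d} = \mathbb{E}(Y_1)/\mathbb{E}(Z_1)$ replacing $\underline{d}$, and the sign of the deviation reversed: on $G_N(\varepsilon)$ one has $\sum Y_i > \overline{d}(1+\varepsilon)\sum Z_i$, and the mean of the relevant linear combination is now $\sim -\varepsilon k \mathbb{E}(Y_1)$, again giving a large-deviation event to which Theorem \ref{prob} applies. The main obstacle, modest but worth care, is organizing the slightly different index ranges (the $L$-shifts and the $N+2$ versus $N+1$ endpoints) so that the discrepancy between the range of $X_i$ (or $Y_i$) and the range of $Z_i$ contributes only a bounded additive error to the mean of $S_k$; this is why one needs $k \geq \zeta_N^{(K_\varepsilon)} - 1$ to grow with $N$, so the $O(L)$ boundary terms are dominated by the linear-in-$k$ drift.
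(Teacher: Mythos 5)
Your proposal is correct and follows essentially the same route as the paper: take logarithms, center the sums using $\underline{d}\,\mathbb{E}(Z_1)=\mathbb{E}(X_1)$, apply the Chernoff bound of Theorem \ref{prob}, sum the geometric tail over $k\geq \zeta_N^{(K_\varepsilon)}-1$, and choose $K_\varepsilon$ large enough that the resulting bound $(N\log 2)^{-bK_\varepsilon/2\mathbb{E}(Z_1)}$ is summable, so Borel--Cantelli applies. The one point to tighten is that the $L+2$ unmatched $Z_i$ terms coming from the index mismatch are random variables, not a deterministic shift of the mean of $S_k$, so their fluctuation must also be controlled by an exponential tail estimate (or, as the paper does, by splitting the deviation into two separate events, one for the $X$-sum and one for the full $Z$-sum, each handled by Theorem \ref{prob}); this is routine given the moment hypotheses.
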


\begin{remark}
We remark we can take the same constant $K_{\varepsilon }$ in both parts;
the choice of constant will be clear from the proof. We also note that once $%
N$ is suitably large, we will have $\zeta _{N}^{(K_{\varepsilon })}>\max
(3,L+3),$ whence the products above are well defined.
\end{remark}

\begin{proof}
To simplify notation, we will let $\zeta _{N}^{(K_{\varepsilon })}=\zeta
_{N}^{\varepsilon }$.

(i) Upon taking logarithms, we have 
\begin{equation*}
F_{N}(\varepsilon )=\{\omega :\exists k\geq \zeta _{N}^{\varepsilon }-1\text{
with}\sum_{j=N+2}^{N+k-L-1}X_{j}<\underline{d}(1-\varepsilon
)\sum_{j=N+1}^{N+k}Z_{j}\text{ }\}.
\end{equation*}%
The definition of \underline{$d$} ensures that 
\begin{equation*}
\underline{d}(1-\varepsilon )\sum_{j=N+1}^{N+k}Z_{j}=\underline{d}%
(1-\varepsilon )\left[ \sum_{j=N+1}^{N+k}Z_{j}-k\mathbb{E(}Z_{1})\right]
+(1-\varepsilon )k\mathbb{E(}X_{1}).
\end{equation*}%
Pick $N_{0}$ such that if $k\geq $ $\zeta _{N_{0}}^{\varepsilon }-1,$ then $%
(1-\varepsilon )k\leq (1-\varepsilon /2)(k-L-2)$ and $(k-L-2)\varepsilon /4$ 
$\geq k\varepsilon /8$. For all $N\geq N_{0}$, the set $F_{N}(\varepsilon )$
is contained in%
\begin{equation*}
\bigcup_{k\geq \zeta _{N}^{\varepsilon }-1}\left\{ \text{ }%
\sum_{j=N+2}^{N+k-L-1}X_{j}<\underline{d}(1-\varepsilon
)[\sum_{j=N+1}^{N+k}Z_{j}-k\mathbb{E(}Z_{1})]+(1-\frac{\varepsilon }{2}%
)(k-L-2)\mathbb{E(}X_{1})\right\}
\end{equation*}%
\begin{eqnarray*}
&=&\bigcup_{k\geq \zeta _{N}^{\varepsilon }-1}\left\{ 
\begin{array}{c}
\text{ }\left[ \sum_{j=N+2}^{N+k-L-1}X_{j}-(k-L-2)\mathbb{E(}X_{1})\right] +%
\underline{d}(1-\varepsilon )\left[ k\mathbb{E(}Z_{1})-%
\sum_{j=N+1}^{N+k}Z_{j}\right] \\ 
<-\frac{(k-L-2)\varepsilon }{2}\mathbb{E(}X_{1})%
\end{array}%
\right\} \\
&\subseteq &\bigcup_{k\geq \zeta _{N}^{\varepsilon }-1}\left\{ 
\begin{array}{c}
\text{ }\left[ \sum_{j=N+2}^{N+k-L-1}X_{j}-(k-L-2)\mathbb{E(}X_{1})\right] +%
\underline{d}(1-\varepsilon )\left[ k\mathbb{E(}Z_{1})-%
\sum_{j=N+1}^{N+k}Z_{j}\right] \\ 
<-\frac{(k-L-2)\varepsilon }{4}\mathbb{E(}X_{1})-\frac{k\varepsilon }{8}%
\mathbb{E(}X_{1})%
\end{array}%
\right\} .
\end{eqnarray*}%
Hence $F_{N}(\varepsilon )\subseteq $ $\bigcup_{k\geq \zeta
_{N}^{\varepsilon }-1}(A_{k}\bigcup B_{k})$ where 
\begin{equation*}
A_{k}=\left\{ \omega :\left\vert \sum_{i=2}^{k-L-1}X_{i}-(k-L-2)\mathbb{E(}%
X_{1})\right\vert >\frac{(k-L-2)\varepsilon \mathbb{E(}X_{1})}{4}\right\}
\end{equation*}%
and%
\begin{equation*}
B_{k}=\left\{ \omega :\left\vert k\mathbb{E(}Z_{1})-\sum_{i=1}^{k}Z_{i}%
\right\vert >\frac{k\varepsilon \mathbb{E(}X_{1})}{8\underline{d}%
(1-\varepsilon )}\right\} =\left\{ \omega :\left\vert k\mathbb{E(}%
Z_{1})-\sum_{i=1}^{k}Z_{i}\right\vert >\frac{k\varepsilon \mathbb{E(}Z_{1})}{%
8(1-\varepsilon )}\right\} .
\end{equation*}

From Theorem \ref{prob}, there are constants $a_{\varepsilon
},b_{\varepsilon }>0$ such that $\mathbb{P(}A_{k})\leq e^{-a_{\varepsilon
}(k-L-2)}$ and $\mathbb{P(}B_{k})\leq e^{-b_{\varepsilon }k}$ for all $k$.
Thus there are constants $C_{\varepsilon },c_{\varepsilon }>0$ such that for
large enough $N,$%
\begin{equation*}
\mathbb{P}(F_{N}(\varepsilon ))\leq \sum_{k\geq \zeta _{N}-1}\mathbb{P(}%
A_{k})+\mathbb{P(}B_{k})\leq C_{\varepsilon }e^{-c_{\varepsilon }\zeta
_{N}^{\varepsilon }}.
\end{equation*}%
Choose $K_{\varepsilon }$ sufficiently large to ensure that $\exp
(-c_{\varepsilon }\zeta _{N}^{\varepsilon })\leq N^{-2}$ for all $N$. With
this choice, $\sum_{N=1}^{\infty }\mathbb{P}(F_{N}(\varepsilon ))<\infty $
and hence the Borel Cantelli lemma gives the desired result.

(ii) This follows in a very similar fashion. The details are left for the
reader.
\end{proof}

\bigskip

\begin{lemma}
\label{L2}(i) Assume $\mathbb{E(}e^{\lambda X_{1}})<\infty $ for small $%
\left\vert \lambda \right\vert $. For each $\varepsilon >0,$ there is a
constant $K_{\varepsilon }^{\prime }\geq 1$ such that $\mathbb{P}%
(F_{N}^{\prime }(\varepsilon )$ i.o.$)=0$ where%
\begin{equation*}
F_{N}^{\prime }(\varepsilon )=\{\omega :\exists k\geq \zeta
_{N}^{(K_{\varepsilon }^{\prime })}\text{ with }%
\prod_{i=N+1}^{N+k}M_{i}^{-1}>\prod_{i=N+1}^{N+k}r_{i}{}^{-\underline{d}%
(1+\varepsilon )}\}.
\end{equation*}

(ii) Assume $\mathbb{E(}e^{\lambda Y_{1}})<\infty $ for small $\left\vert
\lambda \right\vert $. For each $\varepsilon >0,$ there is a constant $%
K_{\varepsilon }^{\prime }\geq 1$ such that $\mathbb{P}(G_{N}^{\prime
}(\varepsilon )$ i.o.$)=0$ where 
\begin{equation*}
G_{N}^{\prime }(\varepsilon )=\{\omega :\exists k\geq \zeta
_{N}^{(K_{\varepsilon }^{\prime })}\text{ with }%
\prod_{i=N+1}^{N+k}m_{i}^{-1}<\prod_{i=N+1}^{N+k}r_{i}{}^{-\overline{d}%
(1+\varepsilon )}\}.
\end{equation*}
\end{lemma}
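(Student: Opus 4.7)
The plan is to imitate the proof of Lemma \ref{L1}, a task that is actually simpler here because both products in the definitions of $F_N'(\varepsilon)$ and $G_N'(\varepsilon)$ run over the common range $[N+1,N+k]$, so no boundary shifts analogous to the $L+2$ offsets of Lemma \ref{L1} appear.

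For (i), I take logarithms to rewrite the defining inequality of $F_N'(\varepsilon)$ as
$$\sum_{j=N+1}^{N+k} X_j > \underline{d}(1+\varepsilon) \sum_{j=N+1}^{N+k} Z_j.$$
Using $\underline{d}\,\mathbb{E}(Z_1)=\mathbb{E}(X_1)$ and rearranging, this forces
$$\Bigl[\sum_{j=N+1}^{N+k} X_j - k\mathbb{E}(X_1)\Bigr] + \underline{d}(1+\varepsilon)\Bigl[k\mathbb{E}(Z_1) - \sum_{j=N+1}^{N+k} Z_j\Bigr] > \varepsilon k \mathbb{E}(X_1),$$
so at least one of the two summands must exceed $\varepsilon k \mathbb{E}(X_1)/2$. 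By stationarity, I may shift the indices and work with the deviations of $\sum_{j=1}^k X_j$ and $\sum_{j=1}^k Z_j$ from their means; Theorem \ref{prob}, applied to $X_1$ via the hypothesis $\mathbb{E}(e^{\lambda X_1})<\infty$ for small $|\lambda|$ and to $Z_1$ via the Basic Assumption, produces constants $a_\varepsilon, b_\varepsilon > 0$ with the two probabilities bounded by $\exp(-a_\varepsilon k)$ and $\exp(-b_\varepsilon k)$ respectively. Summing geometrically over $k \geq \zeta_N^{(K_\varepsilon')}$ then gives
$$\mathbb{P}(F_N'(\varepsilon)) \leq C_\varepsilon \exp\bigl(-c_\varepsilon \zeta_N^{(K_\varepsilon')}\bigr),$$
and, as in the derivation of \eqref{summing}, choosing $K_\varepsilon'$ sufficiently large in terms of $c_\varepsilon$ and $\mathbb{E}(Z_1)$ makes this at most $N^{-2}$. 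Borel--Cantelli closes (i). Part (ii) follows by the same argument with $X_j,\underline{d}$ replaced by $Y_j,\overline{d}$ and the inequality reversed, so that the bad event forces $\sum Y_j - k\mathbb{E}(Y_1)$ to be very negative or $\sum Z_j - k\mathbb{E}(Z_1)$ to be very positive.

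The only real difficulty is bookkeeping: one must track how the Chernoff exponent $c_\varepsilon$ depends on $\varepsilon$ and then pick $K_\varepsilon'$ with $K_\varepsilon' c_\varepsilon / (2\mathbb{E}(Z_1)) \geq 2$, in exact parallel with the constraint $G(x) \geq 4\mathbb{E}(Z_1)/B$ appearing in Lemma \ref{phi}. Once that lower bound on $K_\varepsilon'$ is imposed, the summability over $N$ and the Borel--Cantelli conclusion are automatic.
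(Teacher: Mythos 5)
Your proof follows the paper's argument for part (i) essentially verbatim: the same centring of the sums via $\underline{d}\,\mathbb{E}(Z_1)=\mathbb{E}(X_1)$, the same split into two deviation events each controlled by Theorem \ref{prob}, the same geometric summation over $k\geq\zeta_N^{(K_\varepsilon')}$ with $K_\varepsilon'$ chosen to make $\sum_N \mathbb{P}(F_N'(\varepsilon))<\infty$, and Borel--Cantelli; the paper likewise leaves (ii) to the reader. Note only that your description of the forced deviation in (ii) tacitly reads the exponent there as $-\overline{d}(1-\varepsilon)$, which is what the application in Theorem \ref{MainLarge} actually requires (as literally written, with $1+\varepsilon$, the event in $G_N'(\varepsilon)$ is typical rather than rare), so your reading is the correct one.
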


\begin{proof}
Again we will only prove (i) as (ii) is similar and to simplify notation we
will write $\zeta _{N}^{(K_{\varepsilon }^{\prime })}=\zeta
_{N}^{\varepsilon }$.

It is straightforward to see that 
\begin{equation*}
F_{N}^{\prime }(\varepsilon )=\bigcup_{k\geq \zeta _{N}^{\varepsilon
}}\left\{ \text{ }\sum_{j=N+1}^{N+k}X_{j}>\underline{d}(1+\varepsilon
)\sum_{j=N+1}^{N+k}Z_{j}\right\}
\end{equation*}%
\begin{eqnarray*}
&\subseteq &\bigcup_{k\geq \zeta _{N}^{\varepsilon }}\left\{
[\sum_{j=N+1}^{N+k}X_{j}-\mathbb{E}(X_{1})k]+\underline{d}(1+\varepsilon )[k%
\mathbb{E}(Z_{1})-\sum_{j=N+1}^{N+k}Z_{j}]>\varepsilon k\mathbb{E}%
(X_{1})\right\} \\
&\subseteq &\bigcup_{k\geq \zeta _{N}^{\varepsilon }}\left\{ \left\vert
\sum_{j=1}^{k}X_{j}-\mathbb{E}(X_{1})k\right\vert \geq \frac{\varepsilon k%
\mathbb{E}(X_{1})}{2}\right\} \bigcup \left\{ \left\vert k\mathbb{E}%
(Z_{1})-\sum_{j=1}^{k}Z_{j}\right\vert \geq \frac{\varepsilon k\mathbb{E}%
(Z_{1})}{2(\varepsilon +1)}\right\} .
\end{eqnarray*}

From Petrov's theorem, $\mathbb{P}(F_{N}^{\prime }(\varepsilon ))\leq
C_{\varepsilon }e^{-c_{\varepsilon }\zeta _{N}^{\varepsilon }}$ for suitable
constants $C_{\varepsilon },c_{\varepsilon }>0.$ Again, make the choice of $%
K_{\varepsilon }^{\prime }$ to ensure $\sum_{N}$ $\mathbb{P}(F_{N}^{\prime
}(\varepsilon ))<\infty $.

\medskip
\end{proof}

\begin{proof}
\lbrack of Theorem \ref{MainLarge}] We begin with some initial observations
and notation that will be relevant to both the lower and upper $\Phi $%
-dimensions.

\smallskip

Let $\varepsilon >0$ and take $\mathcal{K}_{\varepsilon }=\max
(K_{\varepsilon },4\mathbb{E}(Z_{1})/B)$ where $K_{\varepsilon }$ is the
constant from Lemma \ref{L1} and $B$ arises from the probability theorem as
outlined in (\ref{PetrovB}). Let 
\begin{equation*}
\Phi ^{(\mathcal{K}_{\varepsilon })}(x)=\mathcal{K}_{\varepsilon }\log
\left\vert \log x\right\vert /\left\vert \log x\right\vert
\end{equation*}
and take $\zeta _{N}^{(\mathcal{K}_{\varepsilon })}$ as defined in (\ref%
{zeta}). To simplify notation, we will write $\Phi ^{\varepsilon },\phi
^{\varepsilon }$ and $\zeta _{N}^{\varepsilon }$.

According to Lemma \ref{phi}, there is a set, $\Gamma _{\varepsilon },$ of
full measure, with the property that for every $\omega \in \Gamma
_{\varepsilon }$ there is some integer $\mathcal{N}_{\omega }$ such that for
all $N\geq \mathcal{N}_{\omega }$ we have $\phi _{\omega }^{(\mathcal{%
\varepsilon })}(N)\geq $ $\zeta _{N}^{(\varepsilon )}$. Let $\Gamma
_{\varepsilon }(j)$ be the subset of $\Gamma _{\varepsilon }$ with $\mathcal{%
N}_{\omega }=j\in \mathbb{N}$.

Since $\zeta _{N}^{\varepsilon }\rightarrow \infty ,$ we can choose $J_{0}$
such that if $N\geq J_{0},$ then $\zeta _{N}^{\varepsilon }\geq L+4$. Take $%
\omega \in \Gamma _{\varepsilon }(j)$ and put $J_{j}=\max (j,J_{0})$.

Given $0<r<R<r_{1}\cdot \cdot \cdot r_{J_{j}}$, choose $N=N(\omega )$ and $%
n=n(\omega )>N$ such that 
\begin{eqnarray}
r_{1}\cdot \cdot \cdot r_{N+1} &\leq &R<r_{1}\cdot \cdot \cdot r_{N}\text{
and}  \label{sizerR} \\
r_{1}\cdot \cdot \cdot r_{n} &\leq &r<r_{1}\cdot \cdot \cdot r_{n-1}.  \notag
\end{eqnarray}%
Note that $N\geq J_{j}$ . Furthermore, we have the bounds%
\begin{equation}
(r_{N+2}\cdot \cdot \cdot r_{n-1})^{-1}\leq \frac{R}{r}\leq (r_{N+1}\cdot
\cdot \cdot r_{n})^{-1}.  \label{Rr}
\end{equation}%
If $r\leq r_{1}\cdot \cdot \cdot r_{N+1+\phi ^{\varepsilon }(N+1)}$, then
since the function $x^{1+\Phi (x)}$ is increasing, (\ref{rphi}) tells us that%
\begin{equation*}
r\leq (r_{1}\cdot \cdot \cdot r_{N+1})^{1+\Phi ^{\varepsilon }(r_{1}\cdot
\cdot \cdot r_{N+1})}\leq R^{1+\Phi ^{\varepsilon }(R)}.
\end{equation*}%
On the other hand, if $r\leq R^{1+\Phi ^{\varepsilon }(R)}$, then by (\ref%
{rphiupper}),%
\begin{equation*}
r\leq (r_{1}\cdot \cdot \cdot r_{N})^{1+\Phi ^{\varepsilon }(r_{1}\cdot
\cdot \cdot r_{N})}\leq r_{1}\cdot \cdot \cdot r_{N+\phi ^{\varepsilon
}(N)-1}
\end{equation*}%
and hence $n\geq N+\phi _{\omega }^{\varepsilon }(N)-1\geq N+\zeta
_{N}^{\varepsilon }-1$, so that $n-L-1\geq N+2$.

From Lemma \ref{BallsMoranInt} we know 
\begin{equation*}
\frac{\mu (I_{N+1}(x))}{\mu (I_{n-L-1}(x))}\leq \frac{\mu (B(x,R))}{\mu
(B(x,r))}\leq \frac{\mu (I_{N-L}(x))}{\mu (I_{n}(x))}.
\end{equation*}%
Since 
\begin{equation}
\frac{\mu (I_{j}(x))}{\mu (I_{j+s}(x))}=\frac{\mu (I_{j}(x))}{\mu
(I_{j}(x))p_{j+1}^{(i_{j+1})}\cdot \cdot \cdot p_{j+s}^{(i_{j+s})}}
\end{equation}%
for a suitable choice of probabilities $p_{\ell }^{(i_{\ell })}$, $\ell
=j+1, $ $...,j+s$, we have 
\begin{equation}
(M_{N+2}\cdot \cdot \cdot M_{n-L-1})^{-1}\leq \frac{\mu (B(x,R))}{\mu
(B(x,r))}\leq (m_{N-L+1}\cdot \cdot \cdot m_{n})^{-1}.  \label{ratiobounds}
\end{equation}

\medskip

\medskip

\textbf{Computation of the almost sure lower }$\Phi $\textbf{-dimension: }%
Assume $\mathbb{E(}e^{\lambda X_{1}})<\infty $ for small $\left\vert \lambda
\right\vert $.

Since $\phi _{\omega }^{(\varepsilon )}(N)\geq $ $\zeta _{N}^{\varepsilon }$
for $N\geq J_{j},$ with the notation $F_{N}(\varepsilon )$ from Lemma \ref%
{L1}(i) we have 
\begin{equation*}
\left\{ \omega \in \Gamma _{\varepsilon }(j):\exists k\geq \phi _{\omega
}^{\varepsilon }(N)-1\text{ with }\prod\limits_{i=N+2}^{N+k-L-1}M_{i}^{-1}<%
\prod\limits_{i=N+1}^{N+k}r_{i}{}^{-\underline{d}(1-\varepsilon )}\right\}
\subseteq F_{N}(\varepsilon ).
\end{equation*}%
By Lemma \ref{L1}, $\mathbb{P}(F_{N}(\varepsilon )$ i.o$)=0$, so $%
\{(F_{N}(\varepsilon )$ i.o$\}^{c}=\Lambda _{\varepsilon }$ where $\Lambda
_{\varepsilon }$ has full measure. Thus the set $\Delta _{\varepsilon
}(j)=\Lambda _{\varepsilon }\bigcap \Gamma _{\varepsilon }(j)$ has full
measure in $\Gamma _{\varepsilon }(j)$. Moreover, it has the property that
for each $\omega \in \Delta _{\varepsilon }(j),$ there is an integer $%
N_{\varepsilon ,j}(\omega )\geq J_{j}$ so that if $N\geq N_{\varepsilon
,j}(\omega ),$ then for every $k\geq \phi _{\omega }^{\varepsilon }(N)-1$ we
have 
\begin{equation*}
(M_{N+2}\cdot \cdot \cdot M_{N+k-L-1})^{-1}\geq (r_{N+1}\cdot \cdot \cdot
r_{N+k})^{-\underline{d}(1-\varepsilon )}.
\end{equation*}

For $\omega \in \Delta _{\varepsilon }(j)$, set $\rho (\varepsilon ,\omega
)=r_{1}\cdot \cdot \cdot r_{N_{\varepsilon ,j}(\omega )}$. If $R\leq \rho
(\varepsilon ,\omega )$ satisfies (\ref{sizerR}), then $N\geq N_{\varepsilon
,j}(\omega ),$ hence there can be no choice of $k\geq \phi _{\omega
}^{\varepsilon }(N)-1$ with 
\begin{equation*}
(M_{N+2}\cdot \cdot \cdot M_{N+k-L-1})^{-1}<(r_{N+1}\cdot \cdot \cdot
r_{N+k})^{-\underline{d}(1-\varepsilon )}.
\end{equation*}%
We deduce that for all $r\leq R^{1+\Phi (R)}\leq \rho (\varepsilon ,\omega )$
and for all $x\in E(\omega ),$%
\begin{equation*}
\frac{\mu (B(x,R))}{\mu (B(x,r))}\geq (M_{N+2}\cdot \cdot \cdot
M_{n-L-1})^{-1}\geq (r_{N+1}\cdot \cdot \cdot r_{n})^{-\underline{d}%
(1-\varepsilon )}\geq \left( \frac{R}{r}\right) ^{\underline{d}%
(1-\varepsilon )}.
\end{equation*}

Let $\Delta _{\varepsilon }=\bigcup_{j=1}^{\infty }\Delta _{\varepsilon
}(j) $. This is a set of measure one and we have just shown that $\underline{%
\dim }_{\Phi ^{\varepsilon }}\mu _{\omega }\geq $\underline{$d$}$%
(1-\varepsilon )$ for all $\omega \in \Delta _{\varepsilon }$. Take a
sequence $\varepsilon _{i}\rightarrow 0$ and let $\Delta
=\bigcap\limits_{i=1}^{\infty }\Delta _{\varepsilon _{i}}$. Of course, $%
\mathbb{P}(\Delta )=1$ and $\underline{\dim }_{\Phi ^{(\varepsilon
_{i})}}\mu _{\omega }\geq $\underline{$d$}$(1-\varepsilon _{i})$ for all $%
\omega \in \Delta $.

\smallskip

Now suppose $\Phi $ is any large dimension function. Given any $\varepsilon
_{i},$ we have $\Phi (x)\geq \Phi ^{(\varepsilon _{i})}(x)$ for small enough 
$x$ and therefore $\underline{\dim }_{\Phi }\mu _{\omega }\geq $ $\underline{%
\dim }_{\Phi ^{(\varepsilon _{i})}}\mu _{\omega }$ for all $\omega $. We
conclude that $\underline{\dim }_{\Phi }\mu _{\omega }\geq $\underline{$d$}$%
(1-\varepsilon _{i})$ for all $\omega \in \Delta $ and for all $i$, and
consequently, $\underline{\dim }_{\Phi }\mu _{\omega }\geq $\underline{$d$}
on $\Delta $.

\medskip

For the opposite inequality, fix $\varepsilon >0$ and choose $K_{\varepsilon
}^{\prime }$ and $F_{N}^{\prime }(\varepsilon )$ as in Lemma \ref{L2}(i) so
that $\{F_{N}^{\prime }(\varepsilon )$ i.o.$\}^{c}=\Delta _{\varepsilon
}^{\prime }$ is a set of measure one. That means, for all $\omega \in $ $%
\Delta _{\varepsilon }^{\prime }$ there are arbitrarily large $N$ with the
property that for all large enough $k,$ 
\begin{equation*}
(M_{N+1}\cdot \cdot \cdot M_{N+k})^{-1}\leq (r_{N+1}\cdot \cdot \cdot
r_{N+k)})^{-\underline{d}(1+\varepsilon )}.
\end{equation*}

Let $\Phi $ be any large dimension function. For $\omega \in $ $\Delta
_{\varepsilon }^{\prime }$, take 
\begin{equation*}
R_{N}=\tau r_{1}(\omega )\cdot \cdot \cdot r_{N}(\omega )/2.
\end{equation*}%
As $\tau /2\geq 2^{-L}\geq r_{N+1}\cdot \cdot \cdot r_{N+L}$, we have $%
R_{N}\geq r_{1}\cdot \cdot \cdot r_{N+L}$ and thus if $k\geq \phi _{\omega
}(N+L),$ then 
\begin{equation*}
\varrho _{N,k}:=r_{1}\cdot \cdot \cdot r_{N+L+k}\leq R_{N}^{1+\Phi (R_{N})}.
\end{equation*}%
Fix some Moran set of level $N$ and for each $k\geq L+\phi (N+L)$ consider a
descendent Moran set of step\ $N+k$ where at each step from $N+1$ to $N+k$
we select the child that is assigned the maximum probability. Choose $%
x=x_{N,k}\in E(\omega )$ in that descendent, so $B(x,\varrho
_{N,k})\supseteq I_{N+k}(x)$. Since the distance from $x$ to any Moran set
of level $N$, other than $I_{N}(x),$ is at least $2R_{N}$, we have $%
B(x,R_{N})\cap E(\omega )\subseteq I_{N}(x)$, and therefore%
\begin{equation*}
\frac{\mu (B(x,R_{N}))}{\mu (B(x,\varrho _{N,k}))}\leq \frac{\mu (I_{N}(x))}{%
\mu (I_{N+k}(x))}=(M_{N+1}\cdot \cdot \cdot M_{N+k})^{-1}\text{.}
\end{equation*}%
Hence there are choices of $k\geq \phi _{\omega }(N+L)$ such that 
\begin{equation*}
\frac{\mu (B(x,R_{N}))}{\mu (B(x,\varrho _{N,k}))}\leq (r_{N+1}\cdot \cdot
\cdot r_{N+k})^{-\underline{d}(1+\varepsilon )}=C_{\varepsilon }\left( \frac{%
R_{N}}{\varrho _{N,k}}\right) ^{\underline{d}(1+\varepsilon )}
\end{equation*}%
(with constant $C_{\varepsilon }=(2/\tau )^{\underline{d}(1+\varepsilon )}$%
). It follows that $\underline{\dim }_{\Phi }\mu _{\omega }\leq
(1+\varepsilon )$\underline{$d$} for every $\omega \in \Delta _{\varepsilon
}^{\prime }$ . Again, take a sequence $\varepsilon _{i}\rightarrow 0$ and
let $\Delta ^{\prime }=\bigcap\limits_{i=1}^{\infty }\Delta _{\varepsilon
_{i}}^{\prime },$ a set of full measure. Obviously, $\underline{\dim }_{\Phi
}\mu _{\omega }\leq $\underline{$d$} for all $\omega \in \Delta ^{\prime }$.

\smallskip

Combining these facts, we see that $\underline{\dim }_{\Phi }\mu _{\omega }=$%
\underline{$d$} for all large dimension functions $\Phi $ and for every $%
\omega \in \Delta $ $\bigcap \Delta ^{\prime },$ a set of measure one.

\medskip

\textbf{Computation of the almost sure upper }$\Phi $\textbf{-dimension: }%
This computation is very similar to that of the lower $\Phi $-dimension.
First, use the upper bound in (\ref{ratiobounds}), 
\begin{equation*}
\frac{\mu (B(x,R))}{\mu (B(x,r))}\leq (m_{N-L+1}\cdot \cdot \cdot
m_{n})^{-1},
\end{equation*}%
together with Lemma \ref{L1}(ii) to prove that for each $\varepsilon >0$
there is a set of full measure, $\Delta _{\varepsilon },$ such that $%
\overline{\dim }_{\Phi ^{(\mathcal{K}_{\varepsilon })}}\mu _{\omega }\leq 
\overline{d}(1+\varepsilon )$ for a suitable choice of constant $\mathcal{K}%
_{\varepsilon }$. But any large dimension function $\Phi $ dominates the
function $\Phi ^{(\mathcal{K}_{\varepsilon })}$ for small enough $x$, hence $%
\overline{\dim }_{\Phi }\mu _{\omega }\leq \overline{d}(1+\varepsilon )$ on $%
\Delta _{\varepsilon }$, as well. Then use Lemma \ref{L2}(ii), along with $%
R_{N}=r_{1}\cdot \cdot \cdot r_{N}$, $\varrho _{N,k}=\tau r_{1}\cdot \cdot
\cdot r_{N+k}/2$ and $x_{N,k}\in E(\omega )\bigcap I_{N}$ belonging to a
Moran descendent of level $N+k,$ where at each step $N+1$ to $N+k$ we choose
a Moran set where the minimal probability is assigned, to deduce that $%
\overline{\dim }_{\Phi }\mu _{\omega }\geq \overline{d}(1-\varepsilon )$ on
some set $\Delta _{\varepsilon }^{\prime }$ of full measure (independent of
the choice of $\Phi $). To complete the proof, take a sequence $\varepsilon
_{i}\rightarrow 0$ as before.

\smallskip

Of course, we can take as $\Gamma $ the intersection of the two sets of full
measure where the upper and lower $\Phi $ dimensions are the appropriate
values.
\end{proof}

\begin{remark}
Note that the proof actually shows that if $\varepsilon >0$, then (in the
notation of the proof) $\underline{\dim }_{\Phi ^{(\mathcal{K}_{\varepsilon
})}}\mu _{\omega }\geq \underline{d}(1-\varepsilon )$ and $\overline{\dim }%
_{\Phi ^{(\mathcal{K}_{\varepsilon })}}\mu _{\omega }\leq \overline{d}%
(1+\varepsilon )$ a.s.
\end{remark}

\section{Dimension Results for Small $\Phi $ \label{smallsection}}

\textbf{Terminology}: We call a dimension function $\Phi $ \textbf{small} if
it satisfies $\Phi (x)\ll \log |\log x|/|\log x|$.

The constant function $\Phi =0$ (giving the Assouad dimension) is an example
of a small $\Phi $.

\begin{notation}
Put%
\begin{equation*}
\alpha =\inf \{s:\log m_{1}/\log r_{1}\leq s\text{ a.s.}\}=\esssup \text{ }%
\log m_{1}/\log r_{1},
\end{equation*}%
and 
\begin{equation*}
\beta =\sup \{t:\log M_{1}/\log r_{1}\geq t\text{ a.s.}\}=\essinf \log
M_{1}/\log r_{1}.
\end{equation*}
\end{notation}

Obviously $\alpha \geq \beta $. Here are some other easy facts.

\begin{lemma}
\label{alphabeta}(i) If $\esssup M_{1}=1$, or $\essinf r_{1}=0$ but $\essinf %
M_{1}\neq 0$, then $\beta =0$. If $\essinf m_{1}=0,$ but $\essinf r_{1}\neq
0,$ then $\alpha =\infty $.

(ii) If $\{r_{1},m_{1}\}$ are independent random variables and $\essinf %
m_{1}=$ $0,$ then $\alpha =\infty $.
\end{lemma}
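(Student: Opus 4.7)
The plan is to translate each hypothesis on $\essinf/\esssup$ into a positive-probability event and then, on that event, exhibit an extreme value of the ratio $\log m_1/\log r_1$ or $\log M_1/\log r_1$. Throughout I use that $r_1\leq 1/2$ a.s., so $|\log r_1|\geq \log 2>0$, and that $M_1,m_1\in(0,1)$, so $\log M_1,\log m_1<0$ and the ratios in question are non-negative.

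For the first clause of (i), suppose $\esssup M_1=1$. Then for every $\varepsilon>0$ the event $\{M_1>2^{-\varepsilon}\}$ has positive probability, and on it $|\log M_1|<\varepsilon\log 2\leq \varepsilon|\log r_1|$, so $\log M_1/\log r_1<\varepsilon$ on a set of positive measure. Hence $\beta\leq \varepsilon$ for every $\varepsilon>0$, giving $\beta=0$. For the second clause, if $\essinf r_1=0$ and $\essinf M_1=c>0$, then $|\log M_1|\leq|\log c|$ a.s., while $\mathbb{P}(r_1<\delta)>0$ for every $\delta>0$; on $\{r_1<\delta\}$ we have $\log M_1/\log r_1\leq |\log c|/|\log\delta|$, which is $<\varepsilon$ once $\delta$ is small, so again $\beta=0$. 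The last clause is symmetric: if $\essinf m_1=0$ and $\essinf r_1=c>0$, then $|\log r_1|\leq |\log c|$ a.s., while $\mathbb{P}(m_1<\delta)>0$ for every $\delta$; on this event $\log m_1/\log r_1\geq |\log\delta|/|\log c|$, forcing $\alpha=\infty$.

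Part (ii) is essentially the third clause of (i) but with the control $\essinf r_1\neq 0$ replaced by independence. Since $r_1>0$ a.s.\ there exists $c_0>0$ with $\mathbb{P}(r_1>c_0)>0$, and by independence of $r_1$ and $m_1$, for every $\delta>0$ the event $\{m_1<\delta\}\cap\{r_1>c_0\}$ has positive probability. On this event $\log m_1/\log r_1\geq |\log\delta|/|\log c_0|\to\infty$ as $\delta\to 0^+$, so $\alpha=\infty$.

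There is no real obstacle; the arguments are one-line positive-probability estimates once the right event is written down. The only thing worth flagging is why the side hypotheses ($\essinf r_1\neq 0$ in the last clause of (i), independence in (ii)) are actually needed: if $m_1$ were only small when $r_1$ is comparably small (say $m_1\asymp r_1^{\gamma}$), the ratio $\log m_1/\log r_1$ could remain bounded even with $\essinf m_1=0$, so some form of decoupling between $m_1$ and $r_1$ is unavoidable.
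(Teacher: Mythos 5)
Your proof is correct and follows essentially the same route as the paper: the paper declares part (i) obvious (your one-line positive-probability estimates are exactly the intended reasoning), and for part (ii) the paper uses the identical independence argument, choosing $\varepsilon$ with $\mathbb{P}(r_{1}\geq\varepsilon)>0$ and factoring $\mathbb{P}(m_{1}\leq\varepsilon^{q})\mathbb{P}(r_{1}\geq\varepsilon)>0$, which is your event $\{m_{1}<\delta\}\cap\{r_{1}>c_{0}\}$ in different notation.
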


\begin{proof}
Part (i) is obvious.

(ii) Since $r_{1}(\omega )>0$ for all $\omega ,$ there must be some $%
\varepsilon >0$ such that $\mathbb{P}(r_{1}\geq \varepsilon )>0$. The
independence of $r_{1}$ and $m_{1}$ ensures that for any $q\in \mathbb{R}%
^{+},$%
\begin{align*}
\mathbb{P}(-\log m_{1} \geq -q\log r_{1}) &\geq \mathbb{P}(-\log m_{1}\geq
-q\log \varepsilon \text{ and}-\log r_{1}\leq -\log \varepsilon ) \\
&=\mathbb{P}(-\log m_{1}\geq -q\log \varepsilon \text{ })\mathbb{P}(-\log
r_{1}\leq -\log \varepsilon ) \\
&=\mathbb{P}(m_{1}\leq \varepsilon ^{q})\mathbb{P}(r_{1}\geq \varepsilon ).
\end{align*}%
But $\mathbb{P}(m_{1}\leq \varepsilon ^{q})>0$ as $\essinf m_{1}=$ $0$ and
therefore $\alpha =\infty $.
\end{proof}

\begin{theorem}
\label{small}There is a set $\Gamma $, of full measure in $\Omega ,$ with
the following properties:

(i) \underline{$\dim $}$_{\Phi }\mu _{\omega }\leq \beta $ and $\overline{%
\dim }_{\Phi }\mu _{\omega }\geq \alpha $ for all small dimension functions $%
\Phi $ and all $\omega \in \Gamma $.

(ii) If either $\essinf M_{j}>0$ or $\essinf r_{j}>0,$ then \underline{$\dim 
$}$_{\Phi }\mu _{\omega }\geq \beta $ for all dimension functions $\Phi $
and all $\omega \in \Gamma $.

(iii) If either $\essinf m_{j}>0$ or $\essinf r_{j}>0,$ then $\overline{\dim 
}_{\Phi }\mu _{\omega }\leq \alpha $ for all dimension functions $\Phi $ and
all $\omega \in \Gamma $.
\end{theorem}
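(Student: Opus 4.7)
The plan is to split the argument into the deterministic pointwise bounds of parts (ii) and (iii), which hold for any dimension function, and the extremal-event constructions of part (i), which exploit the small-$\Phi$ hypothesis through Lemma \ref{phi}(ii). Both halves invoke Lemma \ref{BallsMoranInt} to reduce $\mu(B(x,R))/\mu(B(x,r))$ to products of Moran probabilities.

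For parts (ii) and (iii), the essential-sup/inf definitions give, almost surely, $M_j \leq r_j^{\beta}$ and $m_j \geq r_j^{\alpha}$ for every $j$. In part (iii), Lemma \ref{BallsMoranInt} together with $p_j \geq m_j$ yields $\mu(B(x,R))/\mu(B(x,r)) \leq \prod_{j=N-L+1}^{n+1} m_j^{-1}$ for the Moran indices $N, n$ associated with $R, r$. On the ``middle'' range $[N+2, n-1]$ I substitute $m_j^{-1} \leq r_j^{-\alpha}$; combined with $R/r \geq (r_{N+2} \cdots r_{n-1})^{-1}$ from \eqref{Rr}, this produces $(R/r)^{\alpha}$. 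The remaining $L+2$ ``boundary'' factors are controlled by either $r_j \geq r_0$ (so each $r_j^{-\alpha} \leq r_0^{-\alpha}$) or $m_j \geq m_0$ (so each $m_j^{-1} \leq m_0^{-1}$), yielding a uniform constant. Part (ii) is symmetric, using the other side of Lemma \ref{BallsMoranInt}: when $n-L \geq N+1$, $\mu(B(x,R))/\mu(B(x,r)) \geq 1/(M_{N+2} \cdots M_{n-L})$, and $M_j \leq r_j^\beta$ combined with $\essinf r_j > 0$ delivers the bound. The remaining possibility $\essinf M_j > 0$ paired with $\essinf r_j = 0$ forces $\beta = 0$ by Lemma \ref{alphabeta}, so the statement is trivial there.

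For part (i) I would exploit extremal events. Fix $\varepsilon > 0$ and consider $A_i^{\varepsilon} = \{M_i > r_i^{\beta + \varepsilon}\}$ and $B_i^{\varepsilon} = \{m_i < r_i^{\alpha - \varepsilon}\}$ (with a large constant in place of $\alpha - \varepsilon$ when $\alpha = \infty$); by the essential-sup/inf definitions of $\beta$ and $\alpha$, each has positive probability. Being i.i.d.\ in $i$, for each $k \in \mathbb{N}$ the Borel--Cantelli lemma applied to disjoint blocks gives infinitely many disjoint blocks of length $k$ of $A^{\varepsilon}$-events (and likewise of $B^{\varepsilon}$-events) almost surely. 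Intersecting over a countable family of $\varepsilon$ and $k$ values yields a single full-measure set $\Gamma$, defined without reference to $\Phi$. On such an $A^{\varepsilon}$-block $[i, i+k-1]$, I take $R = \tau r_1 \cdots r_{i-L-1}/2$ and $r = r_1 \cdots r_{i+k-1}$, and pick $x \in E(\omega)$ by following the maximum-probability child at each of levels $i, \ldots, i+k-1$. Lemma \ref{BallsMoranInt} then gives
\begin{equation*}
\frac{\mu(B(x,R))}{\mu(B(x,r))} \leq \frac{1}{M_i M_{i+1} \cdots M_{i+k-1}} < \prod_{j=i}^{i+k-1} r_j^{-(\beta+\varepsilon)} \leq C\left(\frac{R}{r}\right)^{\beta+\varepsilon},
\end{equation*}
so the ratio to $(R/r)^{\beta+2\varepsilon}$ is at most $C(r_i \cdots r_{i+k-1})^{\varepsilon} \leq C \cdot 2^{-k\varepsilon}$, which tends to $0$ as $k \to \infty$. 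This forces $\underline{\dim}_\Phi \mu_\omega \leq \beta + 2\varepsilon$ for every $\varepsilon$. The inequality $\overline{\dim}_\Phi \mu_\omega \geq \alpha$ follows symmetrically, using $B^{\varepsilon}$-blocks and the minimum-probability path.

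The main obstacle is enforcing the admissibility condition $r \leq R^{1+\Phi(R)}$ for the pairs constructed above, uniformly over $\omega \in \Gamma$ and over all small $\Phi$. In Moran-level terms this requires $k \geq \phi_\omega(i-L-1)$ up to the additive $L$. Lemma \ref{phi}(ii), applied to any $H \to 0$ with $\Phi(x) \leq H(x) \log|\log x|/|\log x|$, gives $\phi_\omega(N) \leq \chi_N^{(H)}$ almost surely for large $N$, and since $H \to 0$ we have $\chi_N^{(H)} = o(\log N)$. Consequently, for each fixed block length $k$, the admissibility condition is satisfied once $i$ is sufficiently large, with the threshold depending on $\Phi$ and $\omega$. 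Because $\Gamma$ provides infinitely many blocks of every fixed length at arbitrarily large starting indices, a diagonal selection over $k$ (taken large enough for $2^{-k\varepsilon}$ to be small) followed by choosing $i$ large enough for admissibility produces the required sequence of witnesses. The conceptual point is that $\Gamma$ is constructed prior to any $\Phi$, and the dependence on $\Phi$ enters only through how large the starting index must be for admissibility to hold.
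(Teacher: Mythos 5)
Your parts (ii) and (iii) are essentially the paper's own argument and are correct: the definitions of $\alpha$ and $\beta$ as essential extrema give $M_j\le r_j^{\beta}$ and $m_j\ge r_j^{\alpha}$ almost surely for every $j$, Lemma \ref{BallsMoranInt} converts the ball ratio into a product of probabilities, and the $L+2$ boundary factors are absorbed into a constant using whichever essential infimum is positive. Your shortcut that $\essinf M_j>0$ together with $\essinf r_j=0$ forces $\beta=0$ via Lemma \ref{alphabeta}, making that case trivial, is legitimate.

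Part (i), however, has a genuine gap, located exactly at the point you call ``the main obstacle''. Admissibility of your pair $(R,r)$ requires $k\ge\phi_{\omega}(i-L-1)-L$, and for a general small $\Phi$ the depth $\phi_{\omega}(N)$ tends to infinity with $N$. Lemma \ref{phi}(ii) only supplies the upper bound $\phi_{\omega}(N)\le\chi_{N}^{(H)}=o(\log N)$, and $o(\log N)$ is still unbounded: take $\Phi(x)=\sqrt{\log|\log x|}/|\log x|$, which is small, yet satisfies $\Phi(x)|\log x|\to\infty$, so the number of Moran levels needed to descend from $R$ to $R^{1+\Phi(R)}$ is at least $\Phi(R)|\log R|/Z_{\max}\to\infty$ whenever the ratios are bounded below (and is a.s.\ unbounded in general). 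Hence your assertion that ``for each fixed block length $k$, the admissibility condition is satisfied once $i$ is sufficiently large'' is backwards: for fixed $k$ only finitely many starting levels $i$ satisfy $\phi_{\omega}(i-L-1)\le k+L$, and your full-measure set $\Gamma$ (built from infinitely many all-extremal blocks of each fixed length at unspecified, typically very large, positions) says nothing about extremal behaviour at those finitely many admissible positions. The diagonal selection therefore produces no admissible witnesses, and the argument collapses for every small $\Phi$ with $\Phi(x)|\log x|\to\infty$.

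The repair requires blocks whose length grows with the starting level, roughly like $\chi_{N}^{(H)}\sim c\log N$, at which point the probability of an all-extremal block is $q^{c\log N}=N^{-c|\log q|}$ and tends to $0$; the entire difficulty is keeping this non-summable. The paper does this by working with the specific comparison function $\Phi^{\varepsilon}(x)=\frac{\log 2}{2|\log q(\varepsilon)|}\frac{\log|\log x|}{|\log x|}$, whose constant is tuned to the event probability $q(\varepsilon)$ so that $q(\varepsilon)^{J_{N}(\varepsilon)}\ge 1/N$, applying the second Borel--Cantelli lemma along the sparse sequence $N_{k}=k\log k$ (where consecutive blocks are disjoint, hence independent, and $\sum_{k}1/(k\log k)=\infty$), and only afterwards passing to an arbitrary small $\Phi$ via the eventual domination $\Phi\le\Phi^{\varepsilon}$. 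Some such quantitative coupling between block length and event probability is unavoidable, and it is what your proposal is missing.
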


Here are some immediate consequences.

\begin{corollary}
If $\essinf r_{j}>0,$ then \underline{$\dim $}$_{\Phi }\mu _{\omega }=\dim
_{L}\mu _{\omega }=\beta $ and $\overline{\dim }_{\Phi }\mu _{\omega }=\dim
_{A}\mu _{\omega }=\alpha $ for all small $\Phi $ and almost every $\omega $%
. If the number of children is bounded, then \underline{$\dim $}$_{\Phi }\mu
_{\omega }=\dim _{L}\mu _{\omega }=\beta $ for all small $\Phi $ and a.e. $%
\omega $.
\end{corollary}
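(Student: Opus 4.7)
The plan is to derive this corollary directly from Theorem \ref{small}, treating it as a routine bookkeeping exercise that matches hypotheses across parts (i)--(iii).

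First I would observe the two structural facts that make the corollary fall out immediately. The constant function $\Phi \equiv 0$ is a small dimension function (since $0 \ll \log|\log x|/|\log x|$), and with this choice $\underline{\dim}_{\Phi}\mu = \dim_L \mu$ and $\overline{\dim}_{\Phi}\mu = \dim_A \mu$. Consequently, any statement proved for \emph{every} small $\Phi$ automatically specializes to the Assouad and lower Assouad dimensions, so it suffices to establish the $\Phi$-dimension equalities.

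For the first assertion, under $\essinf r_j > 0$, both the hypothesis of Theorem \ref{small}(ii) (either $\essinf M_j > 0$ or $\essinf r_j > 0$) and that of Theorem \ref{small}(iii) are automatically satisfied. On the common full-measure set $\Gamma$ provided by Theorem \ref{small}, part (i) supplies $\underline{\dim}_{\Phi}\mu_\omega \leq \beta$ and $\overline{\dim}_{\Phi}\mu_\omega \geq \alpha$ for all small $\Phi$, while parts (ii) and (iii) supply the reverse inequalities $\underline{\dim}_{\Phi}\mu_\omega \geq \beta$ and $\overline{\dim}_{\Phi}\mu_\omega \leq \alpha$ for \emph{every} dimension function $\Phi$. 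Combining these on $\Gamma$ yields the asserted equalities, and specializing to $\Phi \equiv 0$ gives the $\dim_L$/$\dim_A$ identifications.

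For the second assertion, I would note that a uniform bound $T_n \leq T^*$ on the number of children forces $M_n \geq 1/T_n \geq 1/T^*$ almost surely, so $\essinf M_j > 0$; this is precisely the hypothesis of Theorem \ref{small}(ii). Hence on $\Gamma$ we again have $\underline{\dim}_{\Phi}\mu_\omega \geq \beta$ for every $\Phi$, which combined with the matching upper bound from (i) for small $\Phi$ gives $\underline{\dim}_{\Phi}\mu_\omega = \beta$ for all small $\Phi$, and in particular $\dim_L \mu_\omega = \beta$. The reason the corollary claims nothing about the upper dimensions in this second case is that boundedness of $T_n$ does \emph{not} in general force $\essinf m_j > 0$, so the hypothesis of (iii) need not hold. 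There is essentially no obstacle in the proof itself; the only point that requires any care is observing exactly which of the two alternative hypotheses in (ii) and (iii) is activated in each case.
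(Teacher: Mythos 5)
Your proposal is correct and follows exactly the route the paper intends: the corollary is stated as an immediate consequence of Theorem \ref{small}, obtained by noting that $\essinf r_j>0$ activates both (ii) and (iii) (and boundedness of the number of children forces $M_n\geq 1/T_n$ and hence $\essinf M_j>0$, activating (ii)), then combining with (i) and specializing to the small dimension function $\Phi\equiv 0$ to recover $\dim_L$ and $\dim_A$. No gaps.
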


\begin{proof}
\lbrack of Theorem \ref{small}] (i) We will assume $\alpha <\infty $; the
proof for $\alpha =\infty $ is similar and left for the reader. The
definition of $\alpha $ ensures that given $\varepsilon >0,$ we can choose $%
q(\varepsilon )\leq \exp (-3)$ with 
\begin{equation*}
0<q(\varepsilon )\leq \mathbb{P}\left( -\log m_{1}\geq (\alpha -\varepsilon
)(-\log r_{1})\right) .
\end{equation*}
Let 
\begin{equation*}
\Phi ^{\varepsilon }(x)=\frac{\log 2}{2\left\vert \log q(\varepsilon
)\right\vert }\frac{\log \left\vert \log x\right\vert }{\left\vert \log
x\right\vert }\text{ and }\chi _{N}^{\varepsilon }=\frac{\log (2N\mathbb{E}%
(Z_{1}))}{2\left\vert \log q(\varepsilon )\right\vert }
\end{equation*}%
and obtain the set, $\Gamma _{\varepsilon }$, of full measure from Lemma \ref%
{phi}, with the property that $\phi _{\omega }^{\varepsilon }(N)\leq \chi
_{N}^{\varepsilon }$ eventually for all $\omega \in \Gamma _{\varepsilon }$.
Set $J_{N}(\varepsilon )=\left\lceil \chi _{N}^{\varepsilon }\right\rceil $
(meaning, the next integer) and let%
\begin{equation*}
G_{N}^{\varepsilon }=\left\{ (m_{N+1}\cdot \cdot \cdot
m_{N+J_{N}(\varepsilon )})^{-1}\geq (r_{N+1}\cdot \cdot \cdot
r_{N+J_{N}(\varepsilon )})^{-(\alpha -\varepsilon )}\right\} .
\end{equation*}%
As the random variables $\{m_{i}\}$ are independent and identically
distributed, as are the random variables $\{r_{i}\},$ 
\begin{eqnarray*}
\mathbb{P}(G_{N}^{\varepsilon }) &=&\mathbb{P}\left(
\sum_{i=1}^{J_{N}(\varepsilon )}-\log m_{i}\geq (\alpha -\varepsilon
)\sum_{i=1}^{J_{N}(\varepsilon )}-\log r_{i}\right)  \\
&\geq &\mathbb{P}\left( -\log m_{i}\geq (\alpha -\varepsilon )(-\log r_{i})%
\text{ for each }i=1,...,J_{N}(\varepsilon )\right)  \\
&=&\prod_{i=1}^{J_{N}(\varepsilon )}\mathbb{P}\left( -\log m_{i}\geq (\alpha
-\varepsilon )(-\log r_{i})\right) \geq q(\varepsilon )^{J_{N}(\varepsilon
)}.
\end{eqnarray*}%
It is easy to check that $q(\varepsilon )^{J_{N}(\varepsilon )}\geq 1/N$ for
large $N$, thus if we set $N_{k}=k\log k$, then 
\begin{equation*}
\sum_{k}\mathbb{P}(G_{N_{k}}^{\varepsilon })\geq \sum_{k}q(\varepsilon
)^{J_{N_{k}}(\varepsilon )}\geq \sum_{k}\frac{1}{k\log k}=\infty \text{.}
\end{equation*}%
The fact that $\left\vert \log q(\varepsilon )\right\vert \geq 3$ ensures
that $N_{k+1}-N_{k}\geq \log (k+1)>J_{N_{k}}(\varepsilon )$ (for large $k$),
hence the events $G_{N_{k}}^{\varepsilon }$ are independent and the Borel
Cantelli lemma implies $G_{N_{k}}^{\varepsilon }$ occurs infinitely often
with probability one, say on $\Gamma _{\varepsilon }^{\prime }$.

For $\omega \in \Gamma _{\varepsilon }\bigcap \Gamma _{\varepsilon }^{\prime
}=\Delta _{\varepsilon }$, choose $\mathcal{N}_{\omega }$ such that for $%
N\geq \mathcal{N}_{\omega }$, $\phi _{\omega }^{\varepsilon }(N)\leq \chi
_{N}^{\varepsilon }$ $.$ Put $R_{N}=r_{1}(\omega )\cdot \cdot \cdot
r_{N}(\omega )$ and $\varrho _{N}=\tau $ $r_{1}\cdot \cdot \cdot
r_{N+J_{N}(\varepsilon )}/2$. As $J_{N}(\varepsilon )\geq $ $\phi _{\omega
}^{\varepsilon }(N)$, $\varrho _{N}\leq R_{N}^{1+\Phi ^{\varepsilon
}(R_{N})} $. Take any Moran set $I_{N}$ of step $N$ and consider the
descendent at step $N+J_{N}(\varepsilon )$ where the minimal probability was
chosen each time. Let $x_{N}\in E(\omega )$ belong to that descendent. The
separation property and size of $\varrho _{N}$ ensures that $B(x_{N},\varrho
_{N})\cap E(\omega )\subseteq I_{N+J_{N}(\varepsilon )}(x)$, while $%
B(x_{N},R_{N})\supseteq I_{N}(x)$. Hence 
\begin{equation*}
\frac{\mu (B(x_{N},R_{N}))}{\mu (B(x_{N},\varrho _{N}))}\geq \frac{\mu
(I_{N}(x))}{\mu (I_{N+J_{N}(\varepsilon )}(x))}=(m_{N+1}\cdot \cdot \cdot
m_{N+J_{N}(\varepsilon )})^{-1}.
\end{equation*}%
For infinitely many $N\in \{N_{k}\},$ 
\begin{equation*}
(m_{N+1}\cdot \cdot \cdot m_{N+J_{N}(\varepsilon )})^{-1}\geq (r_{N+1}\cdot
\cdot \cdot r_{N+J_{N}(\varepsilon )})^{-(\alpha -\varepsilon )}=\left( 
\frac{\tau }{2}\right) ^{\alpha -\varepsilon }\left( \frac{R}{r}\right)
^{\alpha -\varepsilon },
\end{equation*}%
thus for suitable arbitrarily small $R$, $r\leq R^{1+\Phi (R)}$ and $x\in
E(\omega ),$ we have 
\begin{equation*}
\frac{\mu (B(x,R))}{\mu (B(x,r))}\geq C_{\varepsilon }\left( \frac{R}{r}%
\right) ^{\alpha -\varepsilon }
\end{equation*}%
for a constant $C_{\varepsilon }>0$. That implies $\overline{\dim }_{\Phi
^{\varepsilon }}\mu _{\omega }\geq \alpha -\varepsilon $ for all $\omega \in
\Delta _{\varepsilon }$.

If $\Phi $ is any small dimension function, then $\Phi (x)\leq \Phi
^{\varepsilon }(x)$ for small $x$ and consequently $\overline{\dim }_{\Phi
}\mu _{\omega }\geq $ $\overline{\dim }_{\Phi ^{\varepsilon }}\mu _{\omega
}\geq \alpha -\varepsilon $ for all $\omega \in \Delta _{\varepsilon }$.
Taking a sequence $\varepsilon _{i}\rightarrow 0$, we conclude that $%
\overline{\dim }_{\Phi }\mu _{\omega }\geq \alpha $ for all $\omega \in
\Delta =\bigcap \Delta _{\varepsilon _{i}},$ a set of full measure.

\medskip

The arguments are similar for the lower dimension, but in this case we begin
with%
\begin{equation*}
0<q(\varepsilon )\leq \mathbb{P}\left( -\log M_{1}\geq (\beta +\varepsilon
)(-\log r_{1})\right)
\end{equation*}%
and define $\Phi ^{\varepsilon }$ and $\chi _{N}^{\varepsilon }$
accordingly. Put 
\begin{equation*}
G_{N}^{\varepsilon }=\left\{ \omega :(M_{N+1}\cdot \cdot \cdot
M_{N+L+J_{N+L}(\varepsilon )})^{-1}\leq (r_{N+1}\cdot \cdot \cdot
r_{N+L+J_{N+L}(\varepsilon )})^{-(\beta +\varepsilon )}\right\} .
\end{equation*}%
For large $k$, $N_{k+1}\geq N_{k}+L+J_{N_{k}+L}(\varepsilon ),$ so analogous
reasoning to the above with the Borel Cantelli lemma implies $%
G_{N_{k}}^{\varepsilon }$ occurs infinitely often with probability one.

Put $R_{N}=\tau r_{1}\cdot \cdot \cdot r_{N}/2$, $\varrho _{N}=r_{1}\cdot
\cdot \cdot r_{N+L+J_{N+L}(\varepsilon )}$, take a Moran set of step $N$ and
consider the descendent at step $N+L+J_{N+L}(\varepsilon )$ where we choose
the maximum probability each time. For $x_{N}\in E(\omega )$ belonging to
this descendent and for suitable arbitrarily large $N\in \{N_{k}\},$ we have%
\begin{eqnarray*}
\frac{\mu (B(x_{N},R_{N}))}{\mu (B(x_{N},\varrho _{N}))} &\leq &\frac{\mu
(I_{N}(x_{N}))}{\mu (I_{N+L+J_{N+L}(\varepsilon )}(x))}=(M_{N+1}\cdot \cdot
\cdot M_{N+L+J_{N+L}(\varepsilon )})^{-1}\text{ } \\
&\leq &(r_{N+1}\cdot \cdot \cdot r_{N+L+J_{N+L}(\varepsilon )})^{-(\beta
+\varepsilon )}\leq C_{\varepsilon }\left( \frac{R}{r}\right) ^{\beta
+\varepsilon },
\end{eqnarray*}%
Thus \underline{$\dim $}$_{\Phi ^{\varepsilon }}\mu \leq \beta +\varepsilon $
on a set of full measure, $\Delta _{\varepsilon }^{\prime }$. As above, we
deduce that for any small dimension function $\Phi ,$ \underline{$\dim $}$%
_{\Phi }\mu \leq \beta $ on the set of full measure $\Delta ^{\prime }=$ $%
\bigcap \Delta _{\varepsilon _{i}}^{\prime }$.

\medskip

(ii) Fix $\omega $ and assume $r\leq R^{1+\Phi (R)}\leq R$ as in (\ref%
{sizerR}), so $R/r$ $\leq (r_{N+1}\cdot \cdot \cdot r_{n})^{-1}$.

For any $x\in E(\omega ),$ Lemma \ref{BallsMoranInt} implies 
\begin{equation}
\frac{\mu (B(x,R))}{\mu (B(x,r))}\geq (M_{N+2}\cdot \cdot \cdot
M_{n-L-1})^{-1}\text{ if }n>N+L+2.  \label{1}
\end{equation}%
Regardless of $n,$ $\mu (B(x,R))/\mu (B(x,r))\geq 1$.

If $\essinf M_{j}=\delta >0$, then in either case 
\begin{equation*}
\frac{\mu (B(x,R))}{\mu (B(x,r))}\geq C_{\delta }(M_{N}\cdot \cdot \cdot
M_{n})^{-1}
\end{equation*}%
for some constant $C_{\delta }>0$. If, instead, $\essinf r_{j}=\delta >0$
and $n\leq N+L+2$, then $R/r\leq \delta ^{-(L+2)},$ while if $n>N+L+2,$ then 
$R/r\leq C_{\delta }(r_{N+2}\cdot \cdot \cdot r_{n-L-1})^{-1}$.

It follows that for any $\omega $ in the set of full measure, $\Omega
_{\varepsilon },$ where $-\log M_{i}/-\log r_{i}\geq \beta -\varepsilon $
for all $i,$ there is a constant $c>0$ such that 
\begin{equation*}
\frac{\mu (B(x,R))}{\mu (B(x,r))}\geq c\left( \frac{R}{r}\right) ^{\beta
-\varepsilon }.
\end{equation*}%
We conclude that for any $\Phi $, \underline{$\dim $}$_{\Phi }\mu \geq \beta 
$ on $\bigcap \Omega _{\varepsilon _{j}}$ where $\varepsilon _{j}\rightarrow
0$.

\medskip

(iii) This is similar to (ii) (but easier) since if $R$ and $r\leq R^{1+\Phi
(R)}$ satisfy (\ref{sizerR}), then for all $n>N,$ 
\begin{equation*}
\frac{\mu (B(x,R))}{\mu (B(x,r))}\leq \frac{\mu (I_{N-L}(x))}{\mu (I_{n}(x))}%
\leq (m_{N-L+1}\cdot \cdot \cdot m_{n})^{-1}.
\end{equation*}
\end{proof}

\begin{remark}
In Subsection \ref{smallcounter}, we see that it is possible to have $%
\overline{\dim }_{\Phi }\mu >\alpha $ and \underline{$\dim $}$_{\Phi }\mu
<\beta $ on sets of positive measure.
\end{remark}

\begin{remark}
It would be interesting to know formulas for the $\Phi $-dimensions when $%
\Phi (x)\sim \log |\log x|/|\log x|$.
\end{remark}

\section{Applications\label{applications}}

In this section we will consider some classes of examples of random Moran
sets and measures to which our results apply.

\subsection{Fixed probabilities}

\begin{example}
One obvious special case of this set up is the deterministic model where $%
T_{j}(\omega )=T,$ $r_{j}(\omega )=r\leq r_{T}$ and $(p_{j}^{(1)}(\omega
),...,p_{j}^{(T_{j})}(\omega ))=(p^{(1)},...,p^{(T)})$ for all $j$ and all $%
\omega $. Formally, this arises by choosing the probability measure to be a
single point mass measure. We clearly have $\mathbb{E(}e^{\lambda X_{1}}),$ $%
\mathbb{E(}e^{\lambda Y_{1}}),$ $\mathbb{E(}e^{\lambda Z_{1}})$ $<\infty $
for all $\lambda $ and $\mathbb{E}(X_{1})=-\log (\max p_{1}^{(j)})$, $%
\mathbb{E}(Y_{1})=-\log (\min p_{1}^{(j)})$, and $\mathbb{E}(Z_{1})=-\log
r_{1}$.

An example of this situation is to take an iterated function system on $%
\mathbb{R}^{D}$, satisfying the strong separation condition, with $T$
similarities all having contraction factor $r>0,$ and probabilities $p^{(j)}$%
. The strong separation condition ensures the $(T,r,\tau )$-separation
condition is automatically satisfied for a suitable choice of $\tau >0$.
Thus the self-similar measure has 
\begin{equation*}
\overline{\dim }_{\Phi }\mu =\frac{\log (\min p^{(j)})}{\log r}\text{ and }%
\underline{\dim }_{\Phi }\mu =\frac{\log (\max p^{(j)})}{\log r}
\end{equation*}%
for all dimension functions $\Phi $. This class of examples was also worked
out in \cite{HH}.

But we do not require the rigid structure of a self-similar set. For
instance, we need not apply the same similarities to different parents and
we need not have the same distances between children of different parents.
\end{example}

\begin{example}
Another special case is when we have a fixed set of probabilities, so $%
\mathbb{E(}e^{\lambda X_{1}}),$ $\mathbb{E(}e^{\lambda Y_{1}})$ are finite
for all $\lambda ,$ but the ratios are chosen independently and identically
distributed from some (non-trivial) set. Provided $\mathbb{E(}e^{\lambda
Z_{1}})$ $<\infty $ for small $\left\vert \lambda \right\vert ,$ then the $%
\Phi $-dimensions follow from the theorems.

If we choose the probabilities to be equal ($1/T$ if there are $T$
children), then there are constants $c,C>0$ such that for any $r\leq R,x\in
E(\omega )$ we have 
\begin{equation*}
cN_{r}(B(x,R))\leq \frac{\mu _{\omega }(B(x,R))}{\mu _{\omega }(B(x,r))}\leq
CN_{r}(B(x,R)).
\end{equation*}%
It follows that the $\Phi $-dimensions of the random measure $\mu _{\omega }$
and the random set $E(\omega )$ coincide.

For example, if we begin with $I_{0}=[0,1]\subseteq \mathbb{R}$, take $T=2$
and assume $r_{j}$ is chosen uniformly over $(0,1/2)$, then $\mathbb{E(}%
e^{\lambda Z_{1}})$ $<\infty $ for $\left\vert \lambda \right\vert <1$ and $%
\mathbb{E}(Z_{1})=1+\log 2$. Thus for large $\Phi ,$ 
\begin{equation*}
\underline{\dim }_{\Phi }E(\omega )=\overline{\dim }_{\Phi }E(\omega )=\frac{%
\log 2}{1+\log 2}\text{ a.s.}
\end{equation*}%
Since $\essinf r_{j}=0$ and $\esssup r_{j}=1/2,$ we have $\beta =0$ and $%
\alpha =1,$ thus for small $\Phi $ 
\begin{equation*}
\overline{\dim }_{\Phi }E(\omega )=1\text{ and }\underline{\dim }_{\Phi
}E(\omega )=0\text{ a.s.}
\end{equation*}
\end{example}

\subsection{Variable probabilities}

Another interesting class is when the ratios are constant, $r_{j}=r$ for all 
$j,$ but the probabilities, $(p_{j}^{(1)},...,p_{j}^{(T_{j})}),$ are iid
random variables chosen from a non-trivial probability space. In this case,
we obviously have $\mathbb{E(}e^{\lambda Z_{1}})$ $<\infty $ for all $%
\lambda $. Further, the separation condition forces $T_{j}\leq T$ for some $%
T<\infty $, so $M_{j}\geq 1/T$ and $\mathbb{E}(e^{\lambda X_{1}})<\infty $.
Thus we deduce the following results from the two theorems.

\begin{corollary}
\label{C1}(i) There is a set, $\Gamma ,$ of measure one, such that if $\Phi
(x)$ is any large dimension function, then%
\begin{equation*}
\underline{\dim }_{\Phi }\mu _{\omega }=\frac{\mathbb{E(}X_{1})}{|\log r|}=%
\frac{\mathbb{E}(|\log \max_{k}p_{1}^{(k)}|)}{|\log r|}\text{ for all }%
\omega \in \Gamma \text{.}
\end{equation*}%
Further, if there exists $A>0$ such that $\mathbb{E(}e^{\lambda
Y_{1}})<\infty $ for all $\left\vert \lambda \right\vert \leq A,$ then 
\begin{equation*}
\overline{\dim }_{\Phi }\mu _{\omega }=\frac{\mathbb{E(}Y_{1})}{|\log r|}=%
\frac{\mathbb{E}(|\log \min^{k}p_{1}^{(k)}|)}{|\log r|}\text{ for all }%
\omega \in \Gamma .
\end{equation*}

(ii) There is a set, $\Gamma ,$ of measure one, such that if $\Phi (x)$ is
any small dimension function, then%
\begin{equation*}
\underline{\dim }_{\Phi }\mu _{\omega }=\frac{\essinf|\log
\max_{k}p_{1}^{(k)}|}{|\log r|}\text{ and }\overline{\dim }_{\Phi }\mu
_{\omega }=\frac{\esssup|\log \min_{k}p_{1}^{(k)}|}{|\log r|}\text{ }\forall
\omega \in \Gamma \text{.}
\end{equation*}
\end{corollary}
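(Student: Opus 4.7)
The plan is to reduce Corollary \ref{C1} to Theorems \ref{MainLarge} and \ref{small} by verifying that the moment hypotheses of those theorems hold automatically in the present set-up. First I would note that because $r_j \equiv r$ is deterministic, $Z_1 = |\log r|$ is a constant random variable, so $\mathbb{E}(e^{\lambda Z_1})<\infty$ for every $\lambda\in\mathbb{R}$ (in particular the Basic Assumption is satisfied) and $\mathbb{E}(Z_1) = |\log r|$. Next I would justify that the uniform separation condition together with the fixed ratio $r$ forces the number of children $T_j$ to be bounded: the support of $\pi$ lies in $\bigcup_{t\ge 2}(0,r_t]\times\mathcal{S}_t$ and $r_t \to 0$, so the constraint $r\le r_t$ allows only finitely many values of $t$. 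Consequently $M_j\ge 1/T_j\ge 1/T$ for some finite $T$, $X_1 = -\log M_1$ is bounded above by $\log T$, and $\mathbb{E}(e^{\lambda X_1})<\infty$ for every $\lambda$.

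For part (i), Theorem \ref{MainLarge}(i) then yields almost surely
\[
\underline{\dim}_\Phi\mu_\omega \;=\; \frac{\mathbb{E}(X_1)}{\mathbb{E}(Z_1)} \;=\; \frac{\mathbb{E}\bigl(|\log\max_k p_1^{(k)}|\bigr)}{|\log r|}
\]
for every large $\Phi$. Adding the explicit hypothesis that $\mathbb{E}(e^{\lambda Y_1})<\infty$ for small $|\lambda|$, Theorem \ref{MainLarge}(ii) gives the companion formula for $\overline{\dim}_\Phi\mu_\omega$ with $Y_1=-\log m_1 = |\log\min_k p_1^{(k)}|$ playing the role of $X_1$.

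For part (ii), the hypothesis $\essinf r_j = r > 0$ puts us in the regime of the corollary stated immediately after Theorem \ref{small}, which gives $\underline{\dim}_\Phi\mu_\omega = \beta$ and $\overline{\dim}_\Phi\mu_\omega = \alpha$ almost surely for every small $\Phi$. The remaining step is a routine rewrite: since $\log r$ is a strictly negative constant,
\[
\beta \;=\; \essinf\frac{\log M_1}{\log r} \;=\; \frac{\essinf|\log\max_k p_1^{(k)}|}{|\log r|},
\]
and symmetrically $\alpha = \esssup|\log\min_k p_1^{(k)}|/|\log r|$, which is the stated form.

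There is no genuine obstacle here; the whole argument is bookkeeping once Theorems \ref{MainLarge} and \ref{small} are available. The only point that requires a moment's thought is the uniform bound on $T_j$, which is precisely what is needed to ensure the exponential integrability of $X_1$ without any additional assumption on the probability distribution beyond the one already imposed by the separation condition.
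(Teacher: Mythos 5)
Your argument is correct and is essentially identical to the paper's own justification: the paper likewise observes that $Z_1=|\log r|$ is constant, that the separation condition bounds $T_j$ and hence gives $\mathbb{E}(e^{\lambda X_1})<\infty$ automatically, and then quotes Theorem \ref{MainLarge} for part (i) and the $\essinf r_j>0$ corollary of Theorem \ref{small} for part (ii). Your added remark that $r_t\to 0$ forces only finitely many admissible $t$ is a slightly more explicit version of the paper's one-line claim, and the final rewriting of $\alpha$ and $\beta$ is the same routine step.
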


\begin{example}
When $T_{j}=T$ and the probabilities $(p_{j}^{(1)},...,p_{j}^{(T)})$ are
uniformly distributed over the simplex $\mathcal{S}_{T}$ it follows that for
small $\Phi ,$ almost surely $\underline{\dim }_{\Phi }\mu =0$ and $%
\overline{\dim }_{\Phi }\mu =\infty $.
\end{example}

When $T=2$ and the probabilities are uniformly distributed, it is easy to
compute $\mathbb{E}(|\log \max p_{1}^{(k)}|)$ and $\mathbb{E}(|\log \min
p_{1}^{(k)}|)$ since $\min (p,1-p)$ and $\max (p,1-p)$ are uniformly
distributed over $(0,1/2]$ and $[1/2,1)$ respectively. However, for $T>2$
this is a more challenging combinatorial problem answered in the next
Proposition. The proof is given in the Appendix.

\begin{proposition}
\label{unifdist} Suppose the probabilities $(p_{j}^{(1)},...,p_{j}^{(T)})$
are uniformly distributed over the simplex $\mathcal{S}_{T}$. Then $\mathbb{%
E(}e^{\lambda X_{1}}),$ $\mathbb{E(}e^{\lambda Y_{1}})<\infty $ for $%
\left\vert \lambda \right\vert <1$ and%
\begin{equation*}
\mathbb{E(}X_{1})=\sum_{j=1}^{T}(-1)^{j+1}\binom{T}{j}\log j+\sum_{j=1}^{T-1}%
\frac{1}{j}\text{,\quad\ }\mathbb{E(}Y_{1})=\log T+\sum_{j=1}^{T-1}\frac{1}{j%
}.
\end{equation*}
\end{proposition}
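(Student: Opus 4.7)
The plan is to use the standard exponential representation of the uniform distribution on $\mathcal{S}_T$: if $E_1,\ldots,E_T$ are iid $\mathrm{Exp}(1)$ random variables and $S = E_1+\cdots+E_T$, then $(E_1/S,\ldots,E_T/S)$ is uniformly distributed on $\mathcal{S}_T$ and independent of $S \sim \Gamma(T,1)$. Writing $E_{(1)} = \min_k E_k$ and $E_{(T)} = \max_k E_k$, we thus have $\min_k p_1^{(k)} \stackrel{d}{=} E_{(1)}/S$ and $\max_k p_1^{(k)} \stackrel{d}{=} E_{(T)}/S$, so
\begin{equation*}
\mathbb{E}(X_1) = -\mathbb{E}(\log E_{(T)}) + \mathbb{E}(\log S), \qquad \mathbb{E}(Y_1) = -\mathbb{E}(\log E_{(1)}) + \mathbb{E}(\log S).
\end{equation*}

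Three standard computations dispatch the easy pieces. First, since $S \sim \Gamma(T,1)$, one has $\mathbb{E}(\log S) = \psi(T) = -\gamma + H_{T-1}$, where $\psi$ is the digamma function and $H_{T-1} = \sum_{j=1}^{T-1}1/j$. Second, $E_{(1)} \sim \mathrm{Exp}(T)$, so the substitution $u = Tx$ in $\int_0^\infty \log x \cdot Te^{-Tx}\,dx$ gives $\mathbb{E}(\log E_{(1)}) = -\gamma - \log T$. Subtracting immediately yields $\mathbb{E}(Y_1) = \log T + H_{T-1}$, as claimed.

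The main computation is $\mathbb{E}(\log E_{(T)})$. Using the density $T(1-e^{-x})^{T-1}e^{-x}$ of $E_{(T)}$, expanding $(1-e^{-x})^{T-1} = \sum_{j=0}^{T-1}(-1)^j\binom{T-1}{j}e^{-jx}$, and applying the identity $\int_0^\infty \log x\cdot e^{-ax}\,dx = -(\gamma+\log a)/a$, one obtains a double sum which, after reindexing by $k = j+1$ and using $\frac{T}{k}\binom{T-1}{k-1} = \binom{T}{k}$ together with $\sum_{k=0}^T(-1)^k\binom{T}{k} = 0$ to collapse the stray $\gamma$-contributions, reduces to
\begin{equation*}
\mathbb{E}(\log E_{(T)}) = -\gamma - \sum_{k=1}^T(-1)^{k+1}\binom{T}{k}\log k.
\end{equation*}
Combining this with $\mathbb{E}(\log S) = -\gamma + H_{T-1}$ produces the stated expression for $\mathbb{E}(X_1)$.

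For the moment generating functions, note $X_1,Y_1 \geq 0$, so only $\lambda > 0$ is delicate. Since $\max_k p_1^{(k)} \geq 1/T$, the variable $X_1$ is bounded by $\log T$, so $\mathbb{E}(e^{\lambda X_1})$ is finite for every $\lambda$. For $Y_1$, the classical uniform-spacings identity $\mathbb{P}(\min_k p_1^{(k)} > t) = (1-Tt)^{T-1}$ on $[0,1/T]$ (derivable either by inclusion-exclusion or directly from the exponential representation above) yields the density $f(t) = T(T-1)(1-Tt)^{T-2}$, which is bounded near $0$. Hence
\begin{equation*}
\mathbb{E}(e^{\lambda Y_1}) = \int_0^{1/T} t^{-\lambda}\,T(T-1)(1-Tt)^{T-2}\,dt
\end{equation*}
converges precisely when $\lambda < 1$, giving the claimed finiteness for $|\lambda|<1$. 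The main obstacle is the calculation of $\mathbb{E}(\log E_{(T)})$: the binomial expansion generates a sum of $\gamma$-terms that must cancel exactly via the alternating-sum identity, and the reindexing $\binom{T-1}{k-1}/k \mapsto \binom{T}{k}/T$ needs to be handled carefully. Once that integral is evaluated, the remaining steps are routine.
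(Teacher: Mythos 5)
Your proof is correct, and it takes a genuinely different route from the paper's. The paper works directly with the distributions of $m_{1}$ and $M_{1}$ on the simplex: it derives the density of $m_{1}$ geometrically, quotes Holst's inclusion--exclusion formula for $\mathbb{P}(M_{1}\leq z)$, and then evaluates $\mathbb{E}(X_{1})$ by integration by parts followed by the binomial theorem, Melzak's formula and Euler's finite difference formula to kill the stray double sums. You instead invoke the Gamma--Dirichlet representation $(E_{1}/S,\dots,E_{T}/S)\sim \mathrm{Unif}(\mathcal{S}_{T})$, which converts both expectations into $\mathbb{E}(\log S)=\psi(T)=-\gamma+H_{T-1}$ plus the log-moments of the extreme order statistics of iid exponentials; the $\gamma$'s cancel and the only combinatorial input is $\tfrac{T}{k}\binom{T-1}{k-1}=\binom{T}{k}$ together with $\sum_{k=0}^{T}(-1)^{k}\binom{T}{k}=0$. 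I verified your intermediate values ($\mathbb{E}(\log E_{(1)})=-\gamma-\log T$, $\mathbb{E}(\log E_{(T)})=-\gamma-\sum_{k}(-1)^{k+1}\binom{T}{k}\log k$) and both final formulas agree with the paper's, e.g.\ for $T=2$ both give $1-\log 2$ and $1+\log 2$. Your approach buys a noticeably shorter computation of $\mathbb{E}(X_{1})$ (no Holst formula, no $A$ and $B$ terms), at the cost of importing the Dirichlet representation and the digamma value; the paper's argument is longer but entirely elementary and self-contained on the simplex. One small point worth making explicit: linearity of expectation already gives $\mathbb{E}(\log E_{(1)})=\mathbb{E}(\log(E_{(1)}/S))+\mathbb{E}(\log S)$, so the independence of the normalized vector from $S$ is only needed to identify $E_{(1)}/S$ in distribution with $m_{1}$ -- in fact even that is just the representation itself -- and your MGF argument for $Y_{1}$ correctly reduces to the integrability of $t^{-\lambda}$ against a density bounded away from $0$ and $\infty$ near $t=0$, which matches the paper's density $T(T-1)(1-Tt)^{T-2}$.
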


Coupled with Corollary \ref{C1}, this gives many further examples. Here are
two.

\begin{example}
(1) Suppose $E(\omega )$ is the classical middle-third Cantor set. Let $\mu $
be the random measure where the probabilities $(p_{j},1-p_{j})$ are
independent and uniformly distributed random variables. If $\Phi (x)$ is
large, then almost surely 
\begin{equation*}
\overline{\dim }_{\Phi }\mu =\frac{1+\log 2}{\log 3}\text{, }\underline{\dim 
}_{\Phi }\mu =\frac{1-\log 2}{\log 3}.
\end{equation*}

(2) Suppose $E(\omega )$ is a Cantor-like set, but with three children and
all $r_{j}=r<1/3$. Let $\mu $ be the random measure where the probabilities $%
(p_{j}^{(1)},p_{j}^{(2)},p_{j}^{(3)})$ are independent and uniformly
distributed rv's. If $\Phi (x)$ is large, then almost surely 
\begin{equation*}
\overline{\dim }_{\Phi }\mu =\frac{3/2+\log 3}{\left\vert \log r\right\vert }%
\text{, }\underline{\dim }_{\Phi }\mu =\frac{3/2-3\log 2+\log 3}{\left\vert
\log r\right\vert }.
\end{equation*}
\end{example}

\subsection{Random 1-variable model}

Another important class of examples are the random 1-variable Moran sets and
measures described in Example \ref{1var}, hence our theorems give formulas
for the almost sure upper and lower $\Phi $-dimensions for these measures.

If we choose the probabilities associated with each of the IFS to be equal,
then the dimension of the random measure and set $E(\omega )$ coincide. In 
\cite{Tr}, Troscheit showed that $\dim _{qA}E=\dim _{H}E$ almost surely. In
fact, our results show that the upper and lower $\Phi $-dimensions for all
large $\Phi $ coincide almost surely, so even $\dim _{qA}E=\dim _{qL}E$ a.s.

If, for example, the finitely many IFS are chosen with equal likelihood,
then the dimension formulas are very simple. Assume there are a total of $N$
families of iterated function systems, where the $j$'th family consists of $%
K_{j}$ similarities, having (common) contraction factor $r_{j}$ and
probabilities $1/K_{j}$. Then there is a set, $\Gamma $, of full measure,
such that for all all $\omega \in \Gamma ,$%
\begin{equation*}
\overline{\dim }_{\Phi }E(\omega )=\text{ }\underline{\dim }_{\Phi }E(\omega
)=\frac{\sum_{i=1}^{N}\log K_{i}}{\left\vert \sum_{i=1}^{N}\log
r_{i}\right\vert }\text{ for all large }\Phi ,
\end{equation*}%
and 
\begin{equation*}
\overline{\dim }_{\Phi }E(\omega )=\max \frac{\log K_{i}}{\left\vert \log
r_{i}\right\vert },\text{ }\underline{\dim }_{\Phi }E(\omega )=\min \frac{%
\log K_{i}}{\left\vert \log r_{i}\right\vert }\text{ for all small }\Phi 
\text{.}
\end{equation*}

\subsection{Examples of strict inequality in Theorem \protect\ref{small}}

\label{smallcounter}

We conclude with examples that show we need not have $\overline{\dim }_{\Phi
}\mu =\alpha $ or \underline{$\dim $}$_{\Phi }\mu =\beta $ a.s. (in the
notation of Theorem \ref{small}).

For these examples, we will work in $\mathbb{R}$ with the initial compact
set $I_{0}=[0,1]$. Thus all the Moran sets of each step will be intervals.
We will also assume a stronger separation condition, namely that the gaps
adjacent to the Moran intervals of step $n$ which are assigned measure equal
to that of the parent interval times $m_{n}$ (resp., $M_{n})$ have length at
least $\tau r_{1}\cdot \cdot \cdot r_{n-1}$.

\begin{proposition}
\label{strongersetup}In addition to assuming $I_{0}=[0,1]\subseteq \mathbb{R}
$ and the stronger separation condition described above, we will assume that
for each $n,$ $\{m_{n+1},r_{n+1}\}$ is independent of $%
\{m_{l},r_{l}:l=1,...,n\}$. Suppose $\Phi $ is a small dimension function.

If there are constants $\theta ,c>0$ such that for all small $z>0$, $\mathbb{%
P(}m_{1}\leq z)\geq cz^{\theta }$, (resp., $\mathbb{P(}r_{1}\leq z)\geq
cz^{\theta })$, then $\overline{\dim }_{\Phi }\mu =\infty $ (resp., 
\underline{$\dim $}$_{\Phi }\mu =0$) almost surely.
\end{proposition}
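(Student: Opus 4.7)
The plan is a Borel--Cantelli / scale-construction argument. I focus on the first claim, that $\overline{\dim}_{\Phi}\mu = \infty$ almost surely; the second is entirely analogous, with $r_{n+1}$ and the maximum-probability branch taking over the roles of $m_{n+1}$ and the minimum-probability branch. Fix an arbitrary $d > 0$. The goal is to produce, a.s., triples $(x_n, R_n, \varrho_n)$ along an infinite subsequence with $\varrho_n \leq R_n^{1+\Phi(R_n)}$ and $R_n \to 0$, for which $\mu(B(x_n, R_n))/\mu(B(x_n, \varrho_n))$ eventually dominates any fixed multiple of $(R_n/\varrho_n)^d$. This yields $\overline{\dim}_{\Phi}\mu \geq d$, and intersecting the corresponding full-measure sets over a sequence $d_j \to \infty$ gives $\overline{\dim}_{\Phi}\mu = \infty$.

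The three ingredients are as follows. (a) Pick $s \in (0, 1/\theta)$ and set $A_n = \{m_{n+1} \leq n^{-s}\}$. The hypothesis gives $\mathbb{P}(A_n) \geq c n^{-s\theta}$; by the independence assumption on the increments the $A_n$ are independent, so $\sum \mathbb{P}(A_n) = \infty$ and the second Borel--Cantelli lemma gives $A_n$ infinitely often on a full-measure set. (b) Since $\Phi$ is small, for every $\eta > 0$ we have $\Phi(x) \leq \eta \log|\log x|/|\log x|$ for sufficiently small $x$; Lemma \ref{phi}(ii) applied with $H \equiv \eta$ then gives, a.s.\ for large $n$, $\phi_\omega(n) = O(\eta \log n)$, equivalently $R_n^{-\Phi(R_n)} \leq |\log R_n|^{\eta} \leq C n^{\eta}$ with $R_n = r_1 \cdots r_n$. (c) Choose $\eta$ small enough that $d\eta < s$. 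On the intersection of the full-measure sets from (a) and (b), for each large $n$ with $\omega \in A_n$, pick a step-$n$ Moran set $I_n$, descend into its minimum-probability child $I_{n+1}^{\dagger}$, and take $x_n \in I_{n+1}^{\dagger} \cap E(\omega)$. Set
$$R_n = r_1 \cdots r_n \qquad \text{and} \qquad \varrho_n = \min\bigl\{\tfrac{\tau}{2} R_n,\; R_n^{1+\Phi(R_n)}\bigr\},$$
so that $\varrho_n \leq R_n^{1+\Phi(R_n)}$ and simultaneously $\varrho_n \leq \tfrac{\tau}{2} R_n$. The stronger separation hypothesis around minimum-probability intervals forces $B(x_n, \varrho_n) \cap E \subseteq I_{n+1}^{\dagger}$, while $I_n \subseteq B(x_n, R_n)$, so
\[
\frac{\mu(B(x_n, R_n))}{\mu(B(x_n, \varrho_n))} \geq \frac{\mu(I_n)}{\mu(I_{n+1}^{\dagger})} = \frac{1}{m_{n+1}} \geq n^{s},
\]
whereas $R_n/\varrho_n = \max\{2/\tau,\, R_n^{-\Phi(R_n)}\} \leq C' n^{\eta}$, giving $(R_n/\varrho_n)^d \leq C'' n^{d\eta}$. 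The ratio then exceeds any constant multiple of $(R_n/\varrho_n)^d$ as $n$ runs along the subsequence.

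For the companion statement, replace $A_n$ by $\{r_{n+1} \leq n^{-s}\}$, descend into the maximum-probability child $I_{n+1}^*$ of a step-$n$ Moran set, and set $R_n = \tau r_1 \cdots r_n / 2$ and $\varrho_n = r_1 \cdots r_{n+1}$. The stronger separation around $I_{n+1}^*$ forces $B(x_n, R_n) \cap E \subseteq I_{n+1}^*$, while $B(x_n, \varrho_n) \supseteq I_{n+1}^*$, so the measure ratio is bounded by $1$; meanwhile $R_n/\varrho_n = \tau/(2 r_{n+1}) \geq (\tau/2)\, n^{s}$, making $(R_n/\varrho_n)^d \to \infty$ for every $d > 0$. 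The most delicate point in both directions is verifying the constraint $\varrho_n \leq R_n^{1+\Phi(R_n)}$: in the first it is built into the definition of $\varrho_n$, and in the second it reduces to $r_{n+1} \leq (\tau/2) R_n^{\Phi(R_n)}$, which holds eventually because smallness of $\Phi$ gives $R_n^{\Phi(R_n)} \geq n^{-\eta}$ for any $\eta > 0$, whereas $r_{n+1} \leq n^{-s}$ on the event $A_n$.
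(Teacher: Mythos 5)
Your proof is correct, and it takes a genuinely different --- and leaner --- route than the paper's. The paper mimics its proof of Theorem \ref{small}(i): it descends $J_N+1$ levels along the minimum-probability branch, where $J_N=\lceil \chi_N^{(H)}\rceil$ dominates $\phi_\omega(N)$, compares the product $(m_{N+1}\cdots m_{N+J_N+1})^{-1}$ against $(r_{N+1}\cdots r_{N+J_N})^{-\gamma}$ for each fixed $\gamma$, bounds the probability of that event below by $c(K^{-\theta}\delta)^{J_N}$, and runs the second Borel--Cantelli lemma along the sparse subsequence $N_k=k\log k$ (the events there are independent and the probabilities, being $N^{-o(1)}$ since $J_N=o(\log N)$, still sum to infinity). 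You instead descend a single level and exploit the stronger separation condition more directly: for any $\varrho_n\leq \tau R_n/2$, however much smaller than the child's diameter, $B(x_n,\varrho_n)\cap E$ stays inside the one minimum-probability child, so the measure ratio is at least $m_{n+1}^{-1}$; smallness of $\Phi$ makes the geometric penalty $(R_n/\varrho_n)^d\leq \max\{(2/\tau)^d, R_n^{-d\Phi(R_n)}\}\leq Cn^{d\eta}$ only polynomial of arbitrarily small degree, while the tail hypothesis and the second Borel--Cantelli lemma applied to the genuinely independent single-level events $\{m_{n+1}\leq n^{-s}\}$ give $m_{n+1}^{-1}\geq n^{s}$ infinitely often with $s>d\eta$. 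This isolates the mechanism transparently and dispenses with both the product estimate and the sparse-subsequence device; the mirror argument for $\underline{\dim}_\Phi\mu=0$ is equally sound, including your verification that $\varrho_n\leq R_n^{1+\Phi(R_n)}$ holds on the event $\{r_{n+1}\leq n^{-s}\}$ once $\eta<s$. Two small points to tidy: what you actually need in step (b) is the almost sure eventual bound $|\log(r_1\cdots r_n)|\leq 2n\mathbb{E}(Z_1)$, which follows from Theorem \ref{prob} and Borel--Cantelli and is established inside the proof of Lemma \ref{phi}(ii), but is not literally the statement of that lemma nor ``equivalent'' to the bound on $\phi_\omega(n)$; and the definitions require $r<R^{1+\Phi(R)}$ strictly, so shrink $\varrho_n$ marginally when the minimum is attained by $R_n^{1+\Phi(R_n)}$ (the paper's own proofs carry the same slack, as they do with the open-ball versus diameter endpoint issue).
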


\begin{proof}
Being small, we can assume $\Phi \leq H(x)\log \left\vert \log x\right\vert
/\log x|$ where $H$ decreases as $x\downarrow 0$. Let $J_{N}=\left\lceil
\chi _{N}^{(H)}\right\rceil $. For any $\omega $ in the set of full measure
where $\phi _{\omega }(N)\leq \chi _{N}^{(H)}$ eventually, put $%
R_{N}=r_{1}\cdot \cdot \cdot r_{N}$ and $\varrho _{N}=\tau r_{1}\cdot \cdot
\cdot r_{N+J_{N}}$. For large enough $N,$ $\varrho _{N}\leq R_{N}^{1+\Phi
(R_{N})}$. Take any Moran interval $I_{N}$ of step $N$ and consider the
descendent at step $N+J_{N}+1$ where the minimal probability was chosen each
time. Let $x_{N}\in E(\omega )$ belong to that descendent. The stronger
separation condition ensures that $B(x_{N},\varrho _{N})\bigcap E(\omega
)\subseteq I_{N+1+J_{N}}(x)\bigcap E(\omega ),$ so 
\begin{equation*}
\frac{\mu (B(x_{N},R_{N}))}{\mu (B(x_{N},\varrho _{N}))}\geq \frac{\mu
(I_{N}(x_{N}))}{\mu (I_{N+1+J_{N}}(x_{N}))}=(m_{N+1}\cdot \cdot \cdot
m_{N+J_{N}+1})^{-1}.
\end{equation*}

For $\gamma \in \mathbb{R}^{+},$ let 
\begin{eqnarray*}
F_{N} &=&\{\omega :(m_{N+1}\cdot \cdot \cdot m_{N+J_{N}+1})^{-1}\geq
(r_{N+1}\cdot \cdot \cdot r_{N+J_{N}})^{-\gamma }\} \\
&=&\left\{ m_{N+J_{N}+1}^{-1}\geq
\prod_{i=1}^{J_{N}}(m_{N+i}r_{N+i}^{-\gamma })\right\} .
\end{eqnarray*}%
Since $m_{1}r_{1}^{-\gamma }$ is real valued, there must be some $K$ such
that%
\begin{equation*}
0<\mathbb{P}(m_{1}r_{1}^{-\gamma }\leq K):=\delta .
\end{equation*}%
Now, 
\begin{eqnarray*}
\mathbb{P(}F_{N}) &\geq &\mathbb{P}\left( m_{N+J_{N}+1}^{-1}\geq K^{J_{N}}%
\text{ and}\prod_{i=1}^{J_{N}}(m_{N+i}r_{N+i}^{-\gamma })\leq
K^{J_{N}}\right) \\
&\geq &\mathbb{P}\left( m_{N+J_{N}+1}^{-1}\geq K^{J_{N}}\text{ and }%
(m_{N+i}r_{N+i}^{-\gamma })\leq K\text{ each }i=1,...,J_{N}\right) ,
\end{eqnarray*}%
so by independence and the hypothesis on the distribution of $m_{1},$ we have%
\begin{equation*}
\mathbb{P(}F_{N})\geq \mathbb{P(}m_{N+J_{N}+1}^{-1}\geq
K^{J_{N}})\prod_{i=1}^{J_{N}}\mathbb{P(}m_{N+i}r_{N+i}^{-\gamma }\leq K\text{
})\geq c(K^{-\theta }\delta )^{J_{N}}.
\end{equation*}%
Arguing with the Borel Cantelli lemma, as in the proof of Theorem \ref{small}%
, we deduce that for each $\gamma $ and a.a. $\omega ,$ $\overline{\dim }%
_{\Phi }\mu \geq \gamma $ and hence $\overline{\dim }_{\Phi }\mu =\infty $
a.s.

The arguments to see \underline{$\dim $}$_{\Phi }\mu =0$ are similar, but
this time begin with $R_{N}=\tau r_{1}\cdot \cdot \cdot r_{N-1}$ and $%
\varrho _{N}=r_{1}\cdot \cdot \cdot r_{N+L+J_{N+L}},$ which is dominated by $%
R_{N}^{1+\Phi (R_{N})}$ eventually. Take any interval $I_{N}$ of level $N,$
consider the descendent at level $N+J_{N}$ where the maximal probability was
chosen each time and let $x_{N}\in E(\omega )$ belong to that interval. The
stronger separation condition ensures that in this case, 
\begin{equation*}
\frac{\mu (B(x_{N},R_{N}))}{\mu (B(x_{N},\varrho _{N}))}\leq \frac{\mu
(I_{N}(x_{N}))}{\mu (I_{N+L+J_{N+L}}(x_{N}))}=(M_{N+1}\cdot \cdot \cdot
M_{N+L+J_{N+L}})^{-1}.
\end{equation*}

For $\varepsilon >0,$ choose $K>0$ such that $0<\mathbb{P}%
(M_{1}r_{1}^{-\varepsilon }>1/K):=\delta $ and set 
\begin{eqnarray*}
G_{N} &=&\{\omega :(M_{N+1}\cdot \cdot \cdot M_{N+L+J_{N+L}})^{-1}\leq
(r_{N}\cdot \cdot \cdot r_{N+L+J_{N+L}})^{-\varepsilon }\} \\
&=&\left\{ r_{N}^{\varepsilon }\leq
\prod_{i=1}^{L+J_{N+L}}(M_{N+i}r_{N+i}^{-\varepsilon })\right\} .
\end{eqnarray*}%
Then 
\begin{equation*}
\mathbb{P(}G_{N})\geq \mathbb{P(}r_{1}^{\varepsilon }\leq
K^{-(L+J_{N+L})})\delta ^{L+J_{N+L}}\geq c(K^{-\theta /\varepsilon }\delta
)^{L+J_{N+L}}.
\end{equation*}%
Again, we apply the Borel Cantelli lemma and deduce that for any $%
\varepsilon >0$ and a.a. $\omega ,$ \underline{$\dim $}$_{\Phi }\mu \leq
\varepsilon $.
\end{proof}

\begin{remark}
\label{stsep}Notice that the proof shows that if multiple children of a
given parent are assigned the minimum (or maximum) probability, it is enough
that the gaps adjacent to one of these children has the suitable size.
\end{remark}

\begin{example}
An example with $\overline{\dim }_{\Phi }\mu >\alpha $: Take $T=2$ and
choose $p_{j}$ independently and uniformly distributed over $(0,1)$. Set $%
r_{j}=m_{j}/2$ and construct the associated random Cantor set so that the $%
2^{n}$ Moran intervals at step $n$ have length $r_{1}\cdot \cdot \cdot r_{n}$%
. As $m_{j}\leq 1/2,$ all the gaps at step $n$ have length at least $%
r_{1}\cdot \cdot \cdot \cdot r_{n-1}$. Since $\mathbb{P(}m_{1}\geq z)=1-2z$,
an appeal to Proposition \ref{strongersetup} shows that $\overline{\dim }%
_{\Phi }\mu =\infty $ a.s. But, $\log m_{1}/\log r_{1}=\log m_{1}/(\log
m_{1}-\log 2)$ and $\inf \log m_{1}=-\infty $ a.s., thus $\alpha =1$.

More generally, it can be seen in the proof of Proposition \ref{unifdist},
that if there are $T$ children at each step and the probabilities are chosen
uniformly distributed over $\mathcal{S}_{T}$, then $\mathbb{P}(m_{1}\geq
z)=(1-Tz)^{T-1}$. Thus $\mathbb{P}(m_{1}\leq z)=1-(1-Tz)^{T-1}\geq cz$ for
suitable $c>0$.
\end{example}

\begin{example}
An example with $\overline{\dim }_{\Phi }\mu >\alpha $ and \underline{$\dim $%
}$_{\Phi }\mu <\beta $: Consider the special case where we take $%
r_{n}(\omega )=1/(4t)$ and $p_{n}(\omega )=(1/t,...,1/t)$ for all $n,$
whenever $\omega \in \Omega _{t}$. Formally, we can do this by (for example)
defining the discrete probability measure $\pi $ on $\Omega _{0}$ by $\pi
=c\sum_{t=2}^{\infty }t^{-2}\delta _{x_{t}}$ where $x_{t}=(1/4t,$ $%
(1/t,...,1/t))\in \Omega _{t}$ and $c=(\sum_{t\geq 2}t^{-2})^{-1}$. Thus $%
\mathbb{P}(\Omega _{t})=c/t^{2}$ and $\mathbb{E}(e^{-\log r_{1}/2})<\infty $.

Define the Moran set by beginning with $I_{0}=[0,1]$ and applying the rule
that at step $n,$ the $T_{n}$ children of each parent interval are placed
starting at the left endpoint of the parent, with gaps between them of
length $r_{1}\cdot \cdot \cdot r_{n},$ except for the final child of each
parent, which will be placed at the right end of the parent. This
construction ensures that the stronger separation condition of Proposition %
\ref{strongersetup}, as noted in Remark \ref{stsep}, is satisfied with the
right-most child. Obviously,%
\begin{equation*}
\mathbb{P}(m_{1}\leq 1/t)=\mathbb{P}(r_{1}\leq 1/4t)=\mathbb{P}(M_{1}\leq
1/t)=\mathbb{P}(\omega :T(\omega )\geq t)\geq ct^{-2},
\end{equation*}%
so Proposition \ref{strongersetup} gives $\overline{\dim }_{\Phi }\mu
=\infty $ and \underline{$\dim $}$_{\Phi }\mu =0$ almost surely. But $\alpha
=\beta =1$. Of course, $M_{n}=m_{n}=4r_{n}=1/t$ on $\Omega _{t}$ and $%
\mathbb{P(}\Omega _{t})>0$ for all $t=2,3,...$, so the essential infimum of
each of $m_{1},M_{1}$ and $r_{1}$ equals $0$.
\end{example}

For the special cases of the Assouad dimensions more can be said.

\begin{corollary}
Assume we begin with $I_{0}=[0,1]$ and the stronger separation condition as
in Proposition \ref{strongersetup}.

(i) If either $\essinf m_{1}=0$ or $\essinf r_{1}=0$, then $\dim_{A}\mu
_{\omega }=\infty $ a.s.

(ii) If either $\essinf M_{1}=0$ or $\essinf r_{1}=0$, then $\dim_{L}\mu
_{\omega }=0$ a.s.
\end{corollary}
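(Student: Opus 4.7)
Both Assouad dimensions are the $\Phi$-dimensions for the constant function $\Phi\equiv 0$, which is small (one may certify this with any positive constant $H$ in $\Phi(x)\leq H\log|\log x|/|\log x|$), so this corollary sits inside the scope of Proposition \ref{strongersetup} and the plan is to specialise that proposition's argument. The key simplification is that with $\Phi\equiv 0$ the depth restriction $r\leq R^{1+\Phi(R)}$ collapses to just $r\leq R$, so a single pair of consecutive Moran levels is already enough to detect the relevant extremal behaviour; this lets us avoid the $\phi_\omega(N)$-block iteration that drives the proof of Proposition \ref{strongersetup} and, in the ``$m_1$'' (respectively ``$M_1$'') regime, lets us drop the polynomial decay hypothesis entirely.

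For (i) when $\essinf m_1=0$, I would fix $\gamma,C>0$ arbitrarily and pick $\eta>0$ so small that $(\tau/2)^\gamma/\eta\geq C$. Since $\essinf m_1=0$ gives $\mathbb{P}(m_1\leq\eta)>0$ and the $(m_n)$ are i.i.d., the Borel--Cantelli lemma produces almost surely infinitely many $n$ with $m_n\leq\eta$. At each such $n$, the stronger separation condition supplies a minimum-probability child of some parent $I_{n-1}$ whose adjacent gap has length at least $\tau r_1\cdots r_{n-1}$; picking $x\in E(\omega)$ inside that child and setting $R=r_1\cdots r_{n-1}$, $r=\tau r_1\cdots r_{n-1}/2$, the separation forces $B(x,r)\cap\text{supp}\,\mu\subseteq I_n(x)$ and $B(x,R)\supseteq I_{n-1}(x)$, so
\[
\frac{\mu(B(x,R))}{\mu(B(x,r))}\geq\frac{1}{m_n}\geq\frac{1}{\eta},\qquad \frac{R}{r}=\frac{2}{\tau},
\]
and therefore the ratio exceeds $C(R/r)^\gamma$ at arbitrarily small scales. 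This gives $\dim_A\mu_\omega\geq\gamma$ almost surely, and letting $\gamma\to\infty$ yields $\dim_A\mu_\omega=\infty$. Part (ii) with $\essinf M_1=0$ is handled by the mirror construction using a maximum-probability child, producing balls with $\mu(B(x,R))/\mu(B(x,r))\leq 1/M_n$ arbitrarily small relative to $(R/r)^d$ for any $d>0$.

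The $\essinf r_1=0$ sub-cases of both (i) and (ii) resist this single-level device, since the resulting quantities $r_n^\gamma/m_n$ and $M_n r_n^{-d}$ need not be extreme when the probabilities stay bounded away from $0$ and $1$. To push through in these cases I would fall back on the multi-level strategy of Proposition \ref{strongersetup}, driving $r$ down to a level-$(n+k)$ descendent so as to amplify $R/r$ by the product $(r_n\cdots r_{n+k})^{-1}$ and then chaining the separation estimates for mass and gap over the block. The main obstacle here is that only $\essinf r_1=0$ is assumed, not polynomial decay of $\mathbb{P}(r_1\leq z)$, so the Borel--Cantelli step must be set up with care; the cleanest route is to imitate the block-independence trick $N_k=k\log k$ used in Proposition \ref{strongersetup}, replacing the polynomial tail input there by a quantitative estimate extracted from the joint distribution of $(m_n,r_n)$ induced by $\pi$.
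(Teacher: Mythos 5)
Your single\-scale argument for part (i) when $\essinf m_1=0$ is exactly the paper's: the paper takes $R_N=r_1\cdots r_N$, $\varrho_N=\tau r_1\cdots r_N$, a point $x_N$ in a minimum\-probability child, obtains $\mu(B(x_N,R_N))/\mu(B(x_N,\varrho_N))\geq m_{N+1}^{-1}$ with $R_N/\varrho_N=\tau^{-1}$ constant, and lets $m_{N+1}\to 0$ along a Borel--Cantelli subsequence; that piece of your proposal is fine. The rest has genuine gaps. First, your ``mirror construction'' for part (ii) when $\essinf M_1=0$ does not work: an upper bound $\mu(B(x,R))/\mu(B(x,r))\leq M_n^{-1}$ with $R/r=2/\tau$ fixed proves nothing about $\dim_L$, because the ratio is always at least $1$ and $M_n^{-1}\to\infty$. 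To force $\dim_L\mu=0$ you need scales at which $R/r\to\infty$ while the measure ratio stays comparatively small, not scales at which the ratio merely admits a large upper bound. The paper's route is the observation $M_1\geq 1/T_1\geq r_1$, so $\essinf M_1=0$ already forces $\essinf r_1=0$ and only that case needs treatment.

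Second, the $\essinf r_1=0$ case of (ii), which you defer to a multi\-level block argument with unspecified tail estimates, has a one\-line single\-scale proof that you missed and that needs nothing beyond ``$r_N<\varepsilon$ infinitely often'': take $R_N=\tau r_1\cdots r_{N-1}$ and $\varrho_N=r_1\cdots r_N$; the stronger separation condition gives $B(x_N,R_N)\cap E(\omega)=B(x_N,\varrho_N)\cap E(\omega)$, so the measure ratio equals $1$ while $R_N/\varrho_N=\tau/r_N$ blows up along the subsequence where $r_N$ is small, killing every positive exponent. Finally, you leave the $\essinf r_1=0$ case of (i) unproved. Your instinct that this case is genuinely delicate is sound: the paper disposes of it in one line by asserting $\alpha=\infty$ and citing Theorem \ref{small}, but Lemma \ref{alphabeta} yields $\alpha=\infty$ from the reversed hypotheses ($\essinf m_1=0$ with $\essinf r_1\neq 0$), and when the probabilities are bounded away from $0$ one has $\alpha\leq \left\vert\log(\essinf m_1)\right\vert/\log 2<\infty$, so this step of the paper does not go through as written either. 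In short, only one of the four sub\-cases is correctly established in your proposal.
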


\begin{proof}
(i) Take $R_{N}=r_{1}\cdot \cdot \cdot r_{N}$ and $\varrho _{N}=\tau
r_{1}\cdot \cdot \cdot r_{N}$. As $\tau <1$, $\varrho _{N}<R_{N}$ and with a
suitable choice of $x_{N}$ as in Proposition \ref{strongersetup}, 
\begin{equation*}
\frac{\mu _{\omega }(B(x_{N},R_{N}))}{\mu _{\omega }(B(x_{N},\varrho _{N}))}%
\geq m_{N+1}^{-1}(\omega ),
\end{equation*}%
while $R_{N}/\varrho _{N}=\tau ^{-1}$. If $\essinf m_{1}=0,$ then it follows
from the Borel Cantelli lemma that for any $\varepsilon >0$, $\mathbb{P(}%
m_{n}\leq \varepsilon $ i.o$)=1$. Thus for a.e. $\omega $ and any $\gamma
\in \mathbb{R}^{+},$ we have $m_{N+1}^{-1}\geq (R_{N}/\varrho _{N})^{\gamma
} $ for large $N$. That implies $\dim _{A}\mu _{\omega }=\infty $ a.s.

If $\essinf r_{1}=0,$ but $\essinf m_{1}\neq 0,$ then $\alpha =\infty $ and
hence Theorem \ref{small} (with $\Phi =0)$ implies $\dim _{A}\mu _{\omega
}=\infty $ a.s.

(ii) Since $M_{n}\geq 1/T_{n}\geq r_{n}$, if $\essinf M_{1}=0,$ then also $%
\essinf r_{1}=0,$ thus we may assume the latter. The argument is similar to
(i). Take $\omega $ from $\{\inf r_{1}=0\}$. For infinitely many $N,$ $%
r_{N}(\omega )<\tau $ and for such $N,$ put $R_{N}=\tau r_{1}\cdot \cdot
\cdot r_{N-1}$ and $\varrho _{N}=r_{1}\cdot \cdot \cdot r_{N}$ $<R_{N}$. For
suitable $x_{N},$ the gap assumption gives $B(x_{N},R_{N})\bigcap E(\omega
)=B(x_{N},\varrho _{N})\bigcap E(\omega )$. Thus $\mu (B(x_{N},R_{N}))/\mu
(B(x_{N},\varrho _{N}))=1$, while $R_{N}/\varrho _{N}=\tau
r_{N}^{-1}\rightarrow \infty $. It follows that $\dim _{L}\mu _{\omega }=0$
a.s.
\end{proof}

\section{Appendix: Proof of Proposition \protect\ref{unifdist}}

\begin{proof}
\lbrack of Proposition \ref{unifdist}] As $\sum_{j=1}^{T}p_{1}^{(j)}=1$, we
have $\min p_{1}^{(k)}\leq 1/T$, so for every $z\in (0,1/T),$%
\begin{equation*}
\{(p_{1}^{(1)},...,p_{1}^{(T)}):\min p_{1}^{(k)}\geq z\}=(z,...,z)+(1-Tz)%
\mathcal{S}_{T}.
\end{equation*}%
Thus $\mathbb{P}(\min p_{1}^{(k)}\geq z)=(1-Tz)^{T-1}$ and the probability
density function for $m_{1}=\min p_{1}^{(k)}$ is the function $%
f(x)=T(T-1)(1-Tx)^{T-2}$ for $0\leq x\leq 1/T.$

Using the binomial theorem, it follows that 
\begin{eqnarray*}
\mathbb{E(}Y_{1}) &=&\mathbb{E(}-\log
m_{1})=\int_{0}^{1/T}T(T-1)(1-Tx)^{T-2}(-\log x)dx \\
&=&-T(T-1)\sum_{k=0}^{T-2}(-1)^{k}T^{k}\binom{T-2}{k}\int_{0}^{1/T}x^{k}\log
x \\
&=&-T(T-1)\sum_{k=0}^{T-2}(-1)^{k}T^{k}\binom{T-2}{k}\frac{T^{-k-1}}{k+1}%
\left( -\log T-\frac{1}{k+1}\right) .
\end{eqnarray*}

Now we apply special cases of Melzak's formula (c.f. \cite[vol. 5: 1.3, 1.56]%
{Gould}):%
\begin{equation*}
\sum_{k=0}^{n}(-1)^{k}\binom{n}{k}\frac{1}{k+1}=\frac{1}{n+1}
\end{equation*}%
and 
\begin{equation*}
\sum_{k=0}^{n}(-1)^{k}\binom{n}{k}\frac{1}{(k+1)^{2}}=\frac{1}{n+1}%
\sum_{k=0}^{n}\frac{1}{k+1}.
\end{equation*}%
This gives%
\begin{equation*}
\mathbb{E(}Y_{1})=\log T+(T-1)\sum_{k=0}^{T-2}\binom{T-2}{k}\frac{(-1)^{k}}{%
(k+1)^{2}}=\log T+\sum_{k=1}^{T-1}\frac{1}{k}.
\end{equation*}

Similarly, for $\left\vert \lambda \right\vert <1,$ 
\begin{eqnarray*}
\mathbb{E(}e^{\lambda Y_{1}}) &=&T(T-1)\int_{0}^{1/T}x^{-\lambda
}(1-Tx)^{T-2}dx \\
&=&T(T-1)\sum_{k=0}^{T-2}(-1)^{k}T^{k}\binom{T-2}{k}\int_{0}^{1/T}x^{k-%
\lambda }dx \\
&=&(T-1)T^{\lambda }\sum_{k=0}^{T-2}\frac{(-1)^{k}}{k-\lambda +1}=T^{\lambda
}\prod_{k=1}^{T-1}\frac{k}{k-\lambda }.
\end{eqnarray*}%
Here the last equality is another consequence of Melzak's formula (\cite[%
vol. 5: 1.3]{Gould}): 
\begin{equation*}
\sum_{k=0}^{n}(-1)^{k}\binom{n}{k}\frac{1}{k+1-\lambda }=\frac{n!}{%
(1-\lambda )(n+\lambda -1)\cdot \cdot \cdot (2-\lambda )}\text{ for }\lambda
\neq 1,2,...,n+1.
\end{equation*}%
Thus, $\mathbb{E(}e^{\lambda Y_{1}})$ is finite if $\left\vert \lambda
\right\vert <1$.

We have $\mathbb{E(}e^{\lambda X_{1}})<\infty $ since $\max p_{1}^{(k)}\geq
1/T$. Lastly, we compute $\mathbb{E(}X_{1})$. It was proven in \cite{Holst}
that 
\begin{equation*}
\mathbb{P}(M_{1}\leq z)=\mathbb{P}\left( \max_{k=1,...,T}p_{1}^{(k)}\leq
z\right) =1+\sum_{k=1}^{T}(-1)^{k}\binom{T}{k}(1-kz)_{+}^{T-1}:=F(z)
\end{equation*}%
where $x_{+}=\max \{x,0\}$. Note that $(1-kz)_{+}=0$ if $z\geq 1/k$.

Since $M_{1}\geq 1/T$ with equality only on a set of measure zero,
integration by parts and the binomial theorem give%
\begin{eqnarray*}
\mathbb{E(}X_{1}) &=&\mathbb{E(}-\log M_{1})=\int_{1/T}^{1}-F^{\prime
}(x)\log (x)dx=\int_{1/T}^{1}\frac{F(x)}{x}dx \\
&=&\int_{1/T}^{1}\frac{dx}{x}+\sum_{k=1}^{T}(-1)^{k}\binom{T}{k}%
\int_{1/T}^{1/k}\frac{(1-kx)^{T-1}}{x}dx \\
&=&\log T+\sum_{k=1}^{T}(-1)^{k}\binom{T}{k}\sum_{j=0}^{T-1}(-k)^{j}\binom{%
T-1}{j}\int_{1/T}^{1/k}x^{j-1}dx.
\end{eqnarray*}%
Evaluating the integrals gives%
\begin{eqnarray*}
\mathbb{E(}X_{1}) &=&\log T+\sum_{k=1}^{T}(-1)^{k}\binom{T}{k}(\log T-\log k)
\\
&&+\sum_{k=1}^{T}(-1)^{k}\binom{T}{k}\sum_{j=1}^{T-1}\binom{T-1}{j}%
(-k)^{j}\left( \frac{k^{-j}-T^{-j}}{j}\right) \\
&=&\log T+\sum_{k=1}^{T}(-1)^{k}\binom{T}{k}(\log T-\log k)+A+B
\end{eqnarray*}%
where 
\begin{equation*}
A=\left( \sum_{k=1}^{T}(-1)^{k}\binom{T}{k}\right) \left( \sum_{j=1}^{T-1}%
\binom{T-1}{j}\frac{(-1)^{j}}{j}\right)
\end{equation*}%
and 
\begin{equation*}
B=-\sum_{k=1}^{T}(-1)^{k}\binom{T}{k}\sum_{j=1}^{T-1}\binom{T-1}{j}(-1)^{j}%
\frac{k^{j}T^{-j}}{j}.
\end{equation*}

Another application of the binomial theorem shows that%
\begin{equation}
1+\sum_{k=1}^{T}(-1)^{k}\binom{T}{k}=0.  \label{BI}
\end{equation}%
Together with the combinatorial identity (c.f. \cite[vol. 5: 1.4]{Gould})%
\begin{equation*}
\sum_{j=1}^{T-1}\binom{T-1}{j}\frac{(-1)^{j}}{j}=-\sum_{n=1}^{T-1}\frac{1}{n}%
,
\end{equation*}%
this proves $A=\sum_{n=1}^{T-1}1/n$. Euler's finite difference formula (c.f. 
\cite[vol. 4: 10.1]{Gould}) implies%
\begin{equation*}
\sum_{k=1}^{T}(-1)^{k}\binom{T}{k}k^{j}=0\text{ for }T\geq 2\text{,
\thinspace }1\leq j\leq T-1\text{.}
\end{equation*}%
Changing the order of the summation and applying this formula shows that 
\begin{equation*}
B=\left( \sum_{j=1}^{T-1}\binom{T-1}{j}\frac{(-1)^{j+1}}{T^{j}j}\right)
\left( \sum_{k=1}^{T}(-1)^{k}\binom{T}{k}k^{j}\right) =0.
\end{equation*}%
Using (\ref{BI}), we conclude that 
\begin{equation*}
\mathbb{E(}X_{1})=\sum_{k=1}^{T}(-1)^{k+1}\binom{T}{k}\log k+\sum_{k=1}^{T-1}%
\frac{1}{k}.
\end{equation*}
\end{proof}


\begin{thebibliography}{99}
\bibitem{A2} P. Assouad,\textit{\ \'{E}tude d'une dimension m\'{e}trique li%
\'{e}e \`{a} la possibilit\'{e} de plongements dans }$R${$\sp{n}$}, C. R.
Acad. Sci. Paris S\'{e}r. A-B, \textbf{288}(1979), A731-A734.

\bibitem{CDW} H. Chen, Y. Du and C. Wei, \textit{Quasi-lower dimension and
quasi-Lipschitz mapping}, Fractals, \textbf{25}(2017), 1-9.

\bibitem{Fa} K. Falconer, \textit{Random fractals}, Math. Proc. Camb. Phil.
Soc., \textbf{100}(1986), 559-582.

\bibitem{FK} K. Falconer, J. M. Fraser and A. K\"{a}enm\"{a}ki, \textit{%
Minkowski dimension for measures}, arXiv:2001.07055.

\bibitem{Fraserbook} J. M. Fraser, \textit{Assouad dimension and fractal
geometry}, Cambridge Tracts in Mathematics 222, Camb. Univ. Press, 2020.

\bibitem{FH} J. M. Fraser and D.~Howroyd, \textit{On the upper regularity
dimension of measures}, Indiana Univ. Math. J., \textbf{69}(2020), 685-712.

\bibitem{FMT} J. M. Fraser, J.-J. Miao and S. Troscheit, \textit{The Assouad
dimension of randomly generated fractals}, Ergodic Theory Dynam. Systems, 
\textbf{38}(2018), 982-1011.

\bibitem{FT} J. M. Fraser and S. Troscheit, \textit{Assouad spectrum of
random self-affine carpets}, arXiv: 1805.04643.

\bibitem{FY} J.~M.~Fraser and H. Yu, \textit{New dimension spectra: finer
information on scaling and homogeneity}, Adv. Math., \textbf{329(}2018),
273-328.

\bibitem{GH} I. Garci\'{a} and K.E. Hare, \textit{Properties of
quasi-Assouad dimension}, Ann. Acad. Fennicae (to appear) arXiv:170302526v3.

\bibitem{GHM} I. Garci\'{a}, K.E. Hare and F. Mendivil, \textit{Intermediate
Assouad-like dimensions}, J. Fractal Geometry (to apear) arXiv: 1903.07155.

\bibitem{GHM2} I. Garci\'{a}, K.E. Hare and F. Mendivil, \textit{Almost sure
Assouad-like dimensions of complementary sets}, Math. Z. (to appear) arXiv:
1903.07800.

\bibitem{Gould} H. W. Gould, \textit{Tables of Combinatorial Identities},
2010, Editor J. Quintance.
https://web.archive.org/web/20190629193344/http://www.math.wvu.edu/$\sim $%
gould/

\bibitem{Gr} S. Graf, \textit{Statistically self-similar fractals}, Prob.
Theory Related Fields,\textbf{\ 74}(1987), 357-392.

\bibitem{HH} K.E. Hare and K.G. Hare, \textit{Intermediate Assouad-like
dimensions for measures}, Fractals, (to appear) arXiv:20040513.

\bibitem{HHT} K.E. Hare, K.G. Hare and S. Troscheit, \textit{Quasi-doubling
of self-similar measures with overlaps,} J. Fractal Geometry, \textbf{7}%
(2020), 233-270.

\bibitem{HT} K.E. Hare and S. Troscheit,\textit{\ Lower Assouad dimension of
measures and regularity}, Camb. Phil. Soc., \textbf{170}(2021), 379-415.

\bibitem{Holst} L. Holst, \textit{On the lengths of the pieces of a stick
broken at random}, J. Appl. Prob., \textbf{17}(1980), 623-34.

\bibitem{KL} A. K\"{a}enm\"{a}ki and J. Lehrb\"{a}ck, \textit{Measures with
predetermined regularity and inhomogeneous self-similar sets}, Ark. Mat., 
\textbf{55}(2017), 165--184.

\bibitem{KLV} A. K\"{a}enm\"{a}ki, J. Lehrb\"{a}ck and M. Vuorinen, \textit{%
Dimensions, Whitney covers, and tubular neighborhoods}, Indiana Univ. Math.
J., \textbf{62}(2013), 1861--1889.

\bibitem{L1} D.G. Larman, \textit{A new theory of dimension}, Proc. Lond.
Math. Soc., \textbf{3}(1967), 178--192.

\bibitem{LX} F. L\"{u} and L. Xi, \textit{Quasi-Assouad dimension of fractals%
}, J. Fractal Geom., \textbf{3}(2016), 187-215.

\bibitem{Pe} V. Petrov, \textit{Limit theorems of probability theory:
Sequences of independent random variables}, Oxford Studies in Probability
vol 4, Oxford, 1995.

\bibitem{Tr} S. Troscheit, \textit{The quasi-Assouad dimension for
stochastically self-similar sets}, Proc. Royal Soc. Edinburgh A, \textbf{150}%
(2020), 261-275.

\bibitem{Tr2} S. Troscheit, \textit{Assouad spectrum thresholds for some
random constructions}, Can. Math. Bull., \textbf{63}(2020), 434-453.
\end{thebibliography}
\end{document}